\title{A Note on the Uniform Kan Condition in Nominal Cubical Sets}
\author{Robert Harper and Kuen-Bang Hou (Favonia)}
\date{}
\tikzset{new edge/.style={semithick}}
\tikzset{old edge/.style={dotted}}
\tikzset{hint/.style={dotted}}
\newrobustcmd*{\textbsf}[1]{\textbf{\textsf{#1}}}
\newrobustcmd*{\mathbsf}[1]{\mathbf{\mathsf{#1}}}
\newrobustcmd*{\mathoper}[1]{\mathop{#1}\nolimits}
\newrobustcmd*{\parens}[1]{(#1)}
\newrobustcmd*{\noparens}[1]{#1}
\newrobustcmd*{\sqbracks}[1]{[#1]}
\newrobustcmd*{\cssqbracks}[1]{\cs{[}#1\cs{]}}
\newrobustcmd*{\iso}{\cong}
\newrobustcmd*{\eqdef}{\mathrel{\triangleq}}
\newrobustcmd*{\dom}[1]{\mathoper{\textsf{dom}}\parens{#1}}
\newrobustcmd*{\cod}[1]{\mathoper{\textsf{cod}}\parens{#1}}
\renewrobustcmd*{\hom}[3]{\mathoper{\textsf{hom}}_{#1}\parens{#2,#3}}
\newrobustcmd*{\CC}{\mathord{\mbox{\manimpossiblecube}}}
\newrobustcmd*{\two}{\mathbf{2}}
\newrobustcmd*{\zero}{\mathbf{0}}
\newrobustcmd*{\one}{\mathbf{1}}
\newrobustcmd*{\ext}[2]{{#1},{#2}}
\newrobustcmd*{\minus}[2]{{#1}\setminus{#2}}
\newrobustcmd*{\comb}[2]{{#1},{#2}}
\newrobustcmd*{\bdcomb}[3]{\comb{\ext{#1}{#3}}{#2}}
\newrobustcmd*{\fset}[1]{\{#1\}}
\newrobustcmd*{\emp}{\emptyset}
\newrobustcmd*{\thesetst}[2]{\{\,#1\,\mid\,#2\,\}}
\newrobustcmd*{\famset}[2]{\{\,#1\,\}_{#2}}
\newrobustcmd*{\union}[2]{{#1}\mathbin{\cup}{#2}}
\newrobustcmd*{\id}{\mathord{\textsf{id}}}
\newrobustcmd*{\dcomp}[2]{{#1}\cdot{#2}}
\newrobustcmd*{\ocomp}[2]{{#2}\circ{#1}}
\newrobustcmd*{\inv}[1]{{#1}^{-1}}
\newrobustcmd*{\maps}[2]{{#1}\mapsto{#2}}
\newrobustcmd*{\extf}[3]{{#1}\sqbracks{\maps{x}{y}}}
\newrobustcmd*{\swap}[2]{{#1}\mathbin{\leftrightarrow}{#2}}
\newrobustcmd*{\spec}[2]{{#1}\mathbin{\mapsto}{#2}}
\newrobustcmd*{\incl}[1]{\iota_{#1}}
\newrobustcmd*{\contr}[3]{\maps{#1,#2}{#3}}
\newrobustcmd*{\coyoneda}[1]{\mathoper{\mathbf{y}}^{#1}}
\newrobustcmd*{\hhp}{\mathoper{\underline{\mathbf{h}}}}
\newrobustcmd*{\hhn}{\mathoper{\overline{\mathbf{h}}}}
\newrobustcmd*{\hhpb}[3]{\hhp^{\comb{#1}{#2}}_{#3}}
\newrobustcmd*{\hhnb}[3]{\hhn^{\comb{#1}{#2}}_{#3}}
\newrobustcmd*{\fscube}[1]{\mathoper{\mathbf{\square}}^{#1}}
\newrobustcmd*{\fspbox}[3]{\mathoper{\mathbf{\sqcup}}^{{#1}{;}{#2}}_{#3}}
\newrobustcmd*{\fsnbox}[3]{\mathoper{\mathbf{\sqcap}}^{{#1}{;}{#2}}_{#3}}
\newrobustcmd*{\agemO}{\text{\raisebox{-0.2ex}{\rotatebox[origin=c]{180}{$\Omega$}}}}
\newrobustcmd*{\pafm}[2]{\mathoper{\agemO}^{{#1}}_{#2}}
\newrobustcmd*{\nafm}[2]{\mathoper{\Omega}^{{#1}}_{#2}}
\newrobustcmd*{\pfill}[4]{\underline{\mathoper{\mathsf{fill}}}^{{#2}{;}{#3}}_{#4}}   
\newrobustcmd*{\nfill}[4]{\overline{\mathoper{\mathsf{fill}}}^{{#2}{;}{#3}}_{#4}}     
\newrobustcmd*{\plift}[4]{\underline{\mathoper{\mathsf{lift}}}^{{#2}{;}{#3}}_{#4}}  
\newrobustcmd*{\nlift}[4]{\overline{\mathoper{\mathsf{lift}}}^{{#2}{;}{#3}}_{#4}}    
\newrobustcmd*{\paproj}[3]{\underline{\mathoper{\mathsf{proj}}}^{#1;#2}_#3}
\newrobustcmd*{\naproj}[3]{\overline{\mathoper{\mathsf{proj}}}^{#1;#2}_#3}
\newrobustcmd*{\pnerve}[4]{\underline{\mathoper{\mathsf{nrv}}}^{\cs{#1}{;}{#2}{;}{#3}}_{#4}}
\newrobustcmd*{\nnerve}[4]{\overline{\mathoper{\mathsf{nrv}}}^{\cs{#1}{;}{#2}{;}{#3}}_{#4}}
\newrobustcmd*{\prealize}[4]{\underline{\mathoper{\mathsf{rlz}}}^{\cs{#1}{;}{#2}{;}{#3}}_{#4}}
\newrobustcmd*{\nrealize}[4]{\overline{\mathoper{\mathsf{rlz}}}^{\cs{#1}{;}{#2}{;}{#3}}_{#4}}
\newrobustcmd*{\algpbox}[4]{\mathoper{\mathbf{\underline{#1}}}^{{#2}{;}{#3}}_{#4}}
\newrobustcmd*{\algnbox}[4]{\mathoper{\mathbf{\overline{#1}}}^{{#2}{;}{#3}}_{#4}}
\newrobustcmd*{\geopbox}[4]{\mathoper{\mathsf{\underline{geobox}}}\sqbracks{\cfib{#1}}^{{#2}{;}{#3}}_{#4}}
\newrobustcmd*{\geonbox}[4]{\mathoper{\mathsf{\overline{geobox}}}\sqbracks{\cfib{#1}}^{{#2}{;}{#3}}_{#4}}
\newrobustcmd*{\SET}{\mathord{\textbsf{Set}}}
\newrobustcmd*{\unitset}{1}
\newrobustcmd*{\prodset}[2]{{#1}\times{#2}}
\newrobustcmd*{\funset}[2]{{#1}\to{#2}}
\newrobustcmd*{\voidset}{\emptyset}
\newrobustcmd*{\sumset}[2]{{#1}+{#2}}
\newrobustcmd*{\cs}[1]{\boldsymbol{#1}}
\newrobustcmd*{\cfib}[1]{{#1}} 
\newrobustcmd*{\alg}[1]{\boldsymbol{#1}}
\newrobustcmd*{\bd}{\mathoper{\partial}}
\newrobustcmd*{\face}[3]{\mathop{\bd_{#1}^{#2}}\parens{#3}}
\newrobustcmd*{\catpath}[2]{{#1}\cdot{#2}}
\newrobustcmd*{\nullpath}[1]{\varepsilon_{#1}}
\newrobustcmd*{\invpath}[1]{{#1}^{-1}}
\newrobustcmd*{\fwdpath}[1]{\overrightarrow{#1}}
\newrobustcmd*{\revpath}[1]{\overleftarrow{#1}}
\newrobustcmd*{\CSET}{\mathord{\textbsf{cSet}}}
\newrobustcmd*{\coelt}[2]{\int^{#1}{#2}}
\newrobustcmd*{\IsCtx}[1]{\vdash{#1}}
\newrobustcmd*{\IsSub}[3]{{#2}\vdash{#1}:{#3}}
\newrobustcmd*{\IsTyp}[2]{{#1}\vdash{#2}}
\newrobustcmd*{\IsElt}[3]{{#1}\vdash{#2}:{#3}}
\newrobustcmd*{\empctx}{\bullet}
\newrobustcmd*{\csempctx}{\unitset}
\newrobustcmd*{\extctx}[2]{{#1}\cdot{#2}}
\newrobustcmd*{\csextctx}[2]{\extctx{#1}{#2}}
\newrobustcmd*{\itelt}{\textsf{it}}
\newrobustcmd*{\csitelt}{\textbsf{it}}
\newrobustcmd*{\empsub}{\bullet}
\newrobustcmd*{\csempsub}{\cs{\empsub}}
\newrobustcmd*{\prevsub}{\textsf{prev}}
\newrobustcmd*{\csprevsub}{\textbsf{prev}}
\newrobustcmd*{\extsub}[2]{{#1}\cdot {#2}}
\newrobustcmd*{\csextsub}[2]{\extsub{#1}{#2}}
\newrobustcmd*{\compsub}[2]{\dcomp{#1}{#2}}
\newrobustcmd*{\cscompsub}[2]{\compsub{#1}{#2}}
\newrobustcmd*{\inst}[2]{{#1}\sqbracks{#2}}
\newrobustcmd*{\csinst}[2]{{#1}\cssqbracks{#2}}
\newrobustcmd*{\unittyp}{\textsf{unit}}
\newrobustcmd*{\csunittyp}{\textbsf{unit}}
\newrobustcmd*{\unitelt}{\langle\rangle}
\newrobustcmd*{\csunitelt}{\cs{\langle}\cs{\rangle}}
\newrobustcmd*{\prodtyp}[2]{{#1}\times{#2}}
\newrobustcmd*{\csprodtyp}[2]{{#1}\mathbin{\cs{\times}}{#2}}
\newrobustcmd*{\pairelt}[2]{\langle{#1},{#2}\rangle}
\newrobustcmd*{\cspairelt}[2]{\cs{\langle}{#1},{#2}\cs{\rangle}}
\newrobustcmd*{\fstelt}[1]{\mathoper{\textsf{fst}}\parens{#1}}
\newrobustcmd*{\csfstelt}[1]{\mathoper{\textbsf{fst}}\parens{#1}}
\newrobustcmd*{\sndelt}[1]{\mathoper{\textsf{snd}}\parens{#1}}
\newrobustcmd*{\cssndelt}[1]{\mathoper{\textbsf{snd}}\parens{#1}}
\newrobustcmd*{\funtyp}[2]{{#1}\to{#2}}
\newrobustcmd*{\csfuntyp}[2]{{#1}\mathbin{\cs{\to}}{#2}}
\newrobustcmd*{\lamelt}[2]{\lambda_{#1}\parens{#2}}
\newrobustcmd*{\cslamelt}[2]{\cs{\lambda}_{#1}\parens{#2}}
\newrobustcmd*{\appelt}[2]{{#1}\parens{#2}}
\newrobustcmd*{\csappelt}[2]{\appelt{#1}{#2}}
\newrobustcmd*{\sigtyp}[3]{\Sigma {#2}{:}{#1}{.}{#3}}
\newrobustcmd*{\cssigtyp}[3]{\cs{\Sigma}{#2}\cs{:}{#1}\cs{.}{#3}}
\newrobustcmd*{\pityp}[3]{\Pi {#2}{:}{#1}{.}{#3}}
\newrobustcmd*{\cspityp}[3]{\cs{\Pi}{#2}\cs{:}{#1}\cs{.}{#3}}
\newrobustcmd*{\voidtyp}{\textsf{void}}
\newrobustcmd*{\csvoidtyp}{\textbsf{void}}
\newrobustcmd*{\sumtyp}[2]{{#1}+{#2}}
\newrobustcmd*{\cssumtyp}[2]{{#1}\mathbin{\cs{+}}{#2}}
\newrobustcmd*{\idtyp}[3]{\mathoper{\textsf{Id}}_{#1}\parens{{#2},{#3}}}
\newrobustcmd*{\csidtyp}[3]{\mathoper{\textbsf{Id}}_{#1}\parens{{#2},{#3}}}
\newrobustcmd*{\reflelt}[2]{\mathoper{\textsf{refl}}_{#1}\parens{#2}}
\newrobustcmd*{\csreflelt}[2]{\mathoper{\textbsf{refl}}_{#1}\parens{#2}}
\newrobustcmd*{\csreflcube}[1]{\csreflelt{}{#1}}
\newrobustcmd*{\jelt}[2]{\mathoper{\textsf{J}}\parens{#1,#2}}
\newrobustcmd*{\csjelt}[2]{\mathoper{\textbsf{J}}\parens{#1,#2}}
\renewrobustcmd*{\fam}[2]{{#1}.{#2}}
\newrobustcmd*{\tr}[3]{\mathoper{\textsf{tr}}\sqbracks{#1}\parens{#2}\parens{#3}}
\newrobustcmd*{\idover}[4]{{#3}=^{#1}_{#2}{#4}}
\newrobustcmd*{\absdir}[2]{{#1}\cs{.}{#2}}
\newrobustcmd*{\ivtyp}{\textsf{I}}
\newrobustcmd*{\csivtyp}{\textbsf{I}}
\newrobustcmd*{\ivzelt}{\textsf{0}}
\newrobustcmd*{\csivzelt}{\textbsf{0}}
\newrobustcmd*{\ivoelt}{\textsf{1}}
\newrobustcmd*{\csivoelt}{\textbsf{1}}
\newrobustcmd*{\ivsegelt}{\textsf{seg}}
\newrobustcmd*{\csivsegelt}{\textbsf{seg}}
\newrobustcmd*{\cscirctyp}{\mathbsf{S^1}}
\newrobustcmd*{\csbaseelt}{\mathbsf{\ast}}
\newrobustcmd*{\csloopelt}{\textbsf{loop}}
\newrobustcmd*{\fib}[1]{\mathoper{\mathsf{fib}}\sqbracks{#1}}
\newrobustcmd*{\trans}[4]{\mathoper{\mathsf{trans}}\sqbracks{#1}^{#2}_{#3}\parens{#4}}
\newrobustcmd*{\spantuple}[2]{({#1},{#2})}
\newrobustcmd*{\cospandiagram}[5]{{#1}\xrightarrow{#4}{#3}\xleftarrow{#5}{#2}}
\newrobustcmd*{\spandiagram}[5]{{#1}\xleftarrow{#4}{#3}\xrightarrow{#5}{#2}}
\newrobustcmd*{\tuple}[2]{\langle{#1}\mathbin{;}{#2}\rangle}
\newrobustcmd*{\case}[2]{\{{#1}\mathbin{;}{#2}\}}
\newtheorem{proposition}{Proposition}
\begin{document}
\maketitle{}

\begin{abstract}
  Bezem, Coquand, and Huber have recently given a constructively valid model of higher type theory in a category of
  nominal cubical sets satisfying a novel condition, called the \emph{uniform Kan condition (UKC)}, which generalizes
  the standard cubical Kan condition (as considered by, for example, Williamson in his survey of combinatorial homotopy
  theory) to admit phantom ``additional'' dimensions in open boxes.  This note, which represents the authors'
  attempts to fill in the details of the UKC, is intended for newcomers to the field who may appreciate a more explicit
  formulation and development of the main ideas.  The crux of the exposition is an analogue of the Yoneda Lemma for
  co-sieves that relates geometric open boxes bijectively to their algebraic counterparts, much as its progenitor for
  representables relates geometric cubes to their algebraic counterparts in a cubical set.  This characterization is
  used to give a formulation of uniform Kan fibrations in which uniformity emerges as naturality in the additional
  dimensions.
\end{abstract}

\section{Cubical Sets}
\label{sec:cs}

In their recent landmark paper \cite{bch} present a novel formulation of cubical sets using symbols (or names) for the
\emph{dimensions} (or \emph{coordinates}) of an $n$-dimensional cube.\footnote{\cite{pitts:cubical} gives a formulation
  in terms of his category of nominal sets~\citep{pitts:nominal-sets}} Briefly, they define a category $\CC$ whose
objects are finite sets, $I$, of symbols and whose morphisms $f:I\to J$ are set functions $I\to J+\two$, where $\two$ is
the set $\fset{\zero,\one}$, that are injective on the preimage of $J$.  Identities are identities, and composition is
given as in the Kleisli category for a ``two errors'' monad---the symbols $\zero$ and $\one$ are propagated, and
otherwise the morphisms are composed as functions.

If an object $I$ of $\CC$ has $n$ elements, it is said to be an $n$-dimensional set of dimensions.  The notation
$\ext{I}{x}$ is defined for a symbol $x\notin I$ to be $I\cup\fset{x}$, and is extended to $\ext{I}{x,y,\dots}$ for a
finite sequence of distinct symbols not in $I$ in the evident way.  The notation $\minus{I}{x}$, where $x\in I$, is just
the set $I$ with $x$ omitted.  The morphism $\extf{f}{x}{y}:\ext{I}{x}\to \ext{J}{y}$, where $f:I\to J$, $x\notin I$,
and $y\notin J$, extends $f$ by sending $z\in I$ to $f(z)$ and $x$ to $y$.

Some special morphisms of $\CC$ are particularly important:
\begin{enumerate}
\item Identity: $\id:I\to I$ sending $x\in I$ to itself.
\item Composition: $\dcomp{f}{g}$, $\ocomp{f}{g}:I\to K$, in diagrammatic and conventional form, where $f:I\to J$ and
  $g:J\to K$, defined earlier.
\item Exchange: $\swap{x}{y}:\ext{I}{x,y}\to\ext{I}{x,y}$, where $x,y\notin I$, sending $x$ to $y$ and $y$ to $x$.
\item Specialize: $\spec{x}{\zero}$, $\spec{x}{\one}: \ext{I}{x}\to I$ sending $x$ to $\zero$ and $\one$, respectively.
\item Inclusion: $\incl{x}:I\to \ext{I}{x}$, where $x\notin I$, sending $y\in I$ to $y$.
\end{enumerate}
These are all ``polymorphic'' in the ambient sets $I$ of symbols, as indicated implicitly.  It is useful to keep
in mind that any $f:I\to J$ in the cube category can be written as a composition of specializations, followed by
permutations, followed by inclusions.

A \emph{cubical set} is a covariant presheaf (i.e., a co-preasheaf) on the cube category, which is a functor $\cs{X}:\CC\to\SET$.  Explicitly, this provides
\begin{enumerate}
\item For each object $I$ of $\CC$, a set $\cs{X}(I)$, or $\cs{X}_I$.
\item For each morphism $f:I\to J$ of $\CC$, a function $\cs{X}(f)$, or $\cs{X}_f$, in $\cs{X}_I\to\cs{X}_J$ respecting identity and composition.
\end{enumerate}
As a convenience write $\cs{X}_x$ or $\cs{X}\fset{x}$, for $\cs{X}_{\fset{x}}$, and $\cs{X}_{x,y}$ or
$\cs{X}\fset{x,y}$, for $\cs{X}_{\fset{x,y}}$, and so forth.

Think of $\cs{X}_I$ as consisting of the $I$-cubes, which are $n$-dimensional cubes presented using the $n$ dimensions
given by $I$.  This interpretation is justified by the structure induced by the distinguished morphisms in the cube
category:
\begin{enumerate}
\item $\cs{X}_{\spec{x}{\zero}}$ and $\cs{X}_{\spec{x}{\one}}$ mapping $X_{\ext{I}{x}}\to X_I$ are called \emph{face
    maps} that compute the two $(n-1)$-dimensional faces of an $n$-dimensional cube in $\cs{X}_I$ along dimension $x$,
  where $I$ is an $n$-dimensional set of dimensions.  By convention, in low dimensions, $\spec{x}{\zero}$ designates the
  ``left'' or ``bottom'' or ``front'' face, and $\spec{x}{\one}$ designates the ``right'' or ``top'' or ``back'' face,
  according to whether one visualizes the dimension $x$ as being horizontal or vertical or perpendicular.
\item $\cs{X}_{\incl{x}}:\cs{X}_I\to\cs{X}_{\ext{I}{x}}$ is called a \emph{degeneracy map} that treats an $n$-cube as a
  degenerate $(n+1)$-cube, with degeneracy in the dimension $x$.  So, for example, a line in the $x$ dimension may be
  regarded as a square in dimensions $x$ and $y$, corresponding to the reflexive identification of the line with itself.
  Similarly, a point may be thought of as a degenerate-in-$x$ line, and thence as a degenerate-in-$y$ square, or it may
  be thought of as a degenerate-in-$y$ line and then a degenerate-in-$x$ square.
  The equation $\ocomp{\incl{x}}{\incl{y}} = \ocomp{\incl{y}}{\incl{x}}$ along with functoriality
  guarantees that these two degenerate squares are the same;
  a point can thus be thought of as a degenerate-in-$x$-and-$y$ square ``directly''.
\item $\cs{X}_{\swap{x}{y}}:\cs{X}_{\ext{I}{x,y}}\to\cs{X}_{\ext{I}{x,y}}$ is a \emph{change of coordinates map} that
  swaps the names of the dimensions $x$ and $y$; it is necessarily a bijection.  A change of coordinates map is also
  regarded as a degeneracy map in~\cite{bch}
\end{enumerate}
The face maps justify thinking of $\cs{X}_{\emptyset}$ as the \emph{points} of $\cs{X}$, of $\cs{X}_{\fset{x}}$ as the
\emph{lines} (in dimension named $x$), of $\cs{X}_{\fset{x,y}}$ as the \emph{squares}, and so on.  Compositions of face
maps are again face maps, and so too for degeneracies and changes of coordinates.  When $I\subseteq J$, the generalized
inclusion $\incl{}:I\to J$ stands for the evident composition of inclusion maps, in any order, and if $\pi:I\to J$ is a
permutation of sets, then it may be regarded as a change of coordinates map by treating it as a composition of
exchanges.  When the cubical set $\cs{X}$ involved is clear from context, we sometimes write $\face{x}{i}{\kappa}$ for
$\cs{X}_{\spec{x}{i}}$.

It is well to remember that, being a subcategory of $\SET$, the maps in the cubical category enjoy the equational
properties of functions, and that such equations are necessarily respected by any cubical set.  Thus, for example, a
cube $\kappa\in\cs{X}_I$ determines a degenerate $\ext{I}{x}$ cube $\cs{X}_{\incl{x}}(\kappa)\in \cs{X}_{\ext{I}{x}}$ in
the sense that its end points are both $\kappa$:
\begin{align*}
  \cs{X}_{\spec{x}{0}}(\cs{X}_{\incl{x}}(\kappa)) & = \cs{X}_{\ocomp{\incl{x}}{\parens{\spec{x}{0}}}}(\kappa) \\
  & = \cs{X}_{\id}(\kappa) \\
  & = \kappa,
\end{align*}
and the same holds true for the specialization $\spec{x}{1}$.  These equations follow directly from the definition of
the involved morphisms in $\CC$ as certain functions on finite sets.  As a consequence of these \emph{cubical
  identities} every morphism in $\CC$ is equal to a composition of face maps followed by a composition of exchanges
followed by a composition of degeneracies.

\smallskip

A morphism $F:\cs{X}\to\cs{Y}$ of cubical sets is, of course, a natural transformation between them, as functors into
$\SET$.  That is, for each object $I$ of $\CC$ there is a function $F_I:\cs{X}_I\to\cs{Y}_I$ such that for each map
$f:I\to J$ in $\CC$ the equation $\ocomp{F_I}{\cs{Y}_f}=\ocomp{\cs{X}_f}{F_J}$.  Identities and compositions are defined
as usual.  Cubical sets thereby form a category, $\CSET$.

Let $\cs{X}$ be a cubical set, and let $\kappa\in\cs{X}_I$ for some object $I$ of $\CC$.  That is, $\kappa$ is an
$I$-cube of $\cs{X}$ of dimension $n$.  It is useful to consider the cubes $\cs{X}_f(\kappa)\in\cs{X}_J$ as
$f$ ranges over all maps $f:I\to J$ in $\CC$.  Such cubes may be considered as the exposition of the \emph{cubical structure} of
$\kappa$ in the sense that they determine these aspects of $\kappa$:
\begin{enumerate}
\item The $m$-dimensional faces of $\kappa$ for each $m<n$, expressed in terms of various choices of dimensions for
  dimension $m$.  These are determined by the specialization morphisms of $\CC$.
\item The re-orientations of $\kappa$, which are determined by the exchange morphisms of $\CC$.
\item The $m$-dimensional degeneracies of $\kappa$, for each $m>n$, expressed in terms of various choices of dimensions.
  These are given by the inclusion morphisms of $\CC$.
\end{enumerate}
This leads directly to the definition of the \emph{free-standing} $I$-cube, notated $\fscube{I}$,
as the co-representable cubical set
\begin{displaymath}
  \fscube{I} \eqdef \hom{}{I}{-} : \CC\to\SET.
\end{displaymath}
A morphism $\kappa:\fscube{I}\to\cs{X}$ from the free-standing $I$-cube into $\cs{X}$ may be seen as specifying an
$I$-cube in $\cs{X}$.  By the Yoneda Lemma there is a bijection
\begin{displaymath}
  \hom{\CSET}{\fscube{I}}{\cs{X}} \cong \cs{X}_{I},
\end{displaymath}
so that the two concepts of a cube in $\cs{X}$ coincide (up to a bijection that is not usually notated explicitly).

\section{Uniform Kan Complexes}
\label{sec:ukc}

A central idea of~\cite{bch} is the \emph{uniform Kan condition}, a generalization of the well-known Kan condition, that
ensures that a cubical set has sufficient structure to support the interpretation of cubes as identifications.  To
motivate the condition, let us first consider the \emph{homotopic interval}, which is supposed to be an abstraction of
the unit interval on the real line, consisting of two end points, $\ivzelt$ and $\ivoelt$, and a line, $\ivsegelt$,
between them.  The idea is that a mapping from the interval into a space $\cs{X}$ is a ``drawing'' of a line in the
space $\cs{X}$, and hence can be used to construct paths and mediate their homotopies.

It is not clear at once what should be the definition of the interval, but in a spirit of minimalism one
might consider it to be given by the following equations at dimensions $0$ and $1$, and to consist solely of the
requisite degeneracies at all other dimensions:%
\footnote{
  Equivalently, it is the free-standing cube of the singleton set $\fset{x}$ for some dimension $x$.}
\begin{align}
  \label{eq:iv-def-cubical}
  \csivtyp_\emp & \eqdef{} \fset{\ivzelt,\ivoelt} && \\
  \csivtyp_{\fset{x}} & \eqdef{} \fset{x^{\ivzelt},x^{\ivoelt},\ivsegelt} && \text{(any name $x$)}.
\end{align}
This is ``a'' definition of some cubical set, but is it the ``right'' definition?
That is, does it support the interpretation of cubes as identifications?
The presence of the line
$\ivsegelt$ immediately raises questions about the inverse of $\ivsegelt$, and the absence of evidence for the
$\omega$-groupoid laws.

A convenient, direct method for defining the interval is described%
\footnote{Note that we are not referring to the unit interval defined in~\cite{bch},
but the general construction of a cubical set from a groupoid defined later in the paper.}
in~\cite{bch}.  First, construct the interval as a strict
groupoid generated by $\ivzelt$, $\ivoelt$, and $\ivsegelt$, and second, apply a general theorem showing that every
strict groupoid may be turned into a cubical set supporting cubes as identifications.  The latter is accomplished by
essentially taking objects of the strict groupoid as points, morphisms in the strict groupoid as lines, and add squares
sufficient to ensure that the groupoid laws are all properly witnessed.  In the case of the interval the construction is
illustrated by the first three dimensions of the interval given in Figure~\ref{fig:iv-pts-lines-squares}.
After some calculation it turns out that
each $I$-cube in this constructed cubical set is an assignment of corners of an unlabeled $I$-cube
to $\fset{0,1}$.  The assignment \begin{tikzcd} \bullet_0 \arrow{r} & \bullet_1 \end{tikzcd}
corresponds to $\ivsegelt$ and the assignment \begin{tikzcd} \bullet_1 \arrow{r} & \bullet_0 \end{tikzcd} is its inverse.
There are $2^{|I|}$ corners of an unlabeled $I$-cube, and thus the number of $I$-cubes (assignments)
is therefore $2^{2^{|I|}}$.
\begin{figure}[ht]
  \begin{displaymath}
    \begin{array}{l@{\qquad}l}
      \emptyset & 
      \begin{tikzcd}
        \bullet_0
      \end{tikzcd}
      \quad
      \begin{tikzcd}
        \bullet_1
      \end{tikzcd}
      \\[4ex]
      \fset{x} &
      \begin{tikzcd}
        \bullet_0 \arrow{r}{\bullet_0} & \bullet_0
      \end{tikzcd}
      \quad
      \begin{tikzcd}
        \bullet_1 \arrow{r}{\bullet_1} & \bullet_1
      \end{tikzcd}
      \quad
      \begin{tikzcd}
        \bullet_0 \arrow{r}{\ivsegelt} & \bullet_1
      \end{tikzcd}
      \quad
      \begin{tikzcd}
        \bullet_1 \arrow{r}{\invpath{\ivsegelt}} & \bullet_0
      \end{tikzcd}
      \\[4ex]
      \fset{x,y} &
      \begin{array}[t]{rrrr}
        \begin{tikzcd}
          \bullet_0
          \arrow{r}{\bullet_0} &
          \bullet_0 \\
          \bullet_0
          \arrow{u}{\bullet_0}
          \arrow{r}{\bullet_0} &
          \bullet_0
          \arrow{u}{\bullet_0}
        \end{tikzcd}
        &
        \begin{tikzcd}
          \bullet_1
          \arrow{r}{\bullet_1} &
          \bullet_1 \\
          \bullet_1
          \arrow{u}{\bullet_1}
          \arrow{r}{\bullet_1} &
          \bullet_1
          \arrow{u}{\bullet_1}
        \end{tikzcd}
        &
        \begin{tikzcd}
          \bullet_1
          \arrow{r}{\bullet_1} &
          \bullet_1 \\
          \bullet_0
          \arrow{u}{\ivsegelt}
          \arrow{r}{\bullet_0} &
          \bullet_0
          \arrow{u}{\ivsegelt}
        \end{tikzcd}
        &
        \begin{tikzcd}
          \bullet_0
          \arrow{r}{\bullet_0} &
          \bullet_0 \\
          \bullet_1
          \arrow{u}{\invpath{\ivsegelt}}
          \arrow{r}{\bullet_1} &
          \bullet_1
          \arrow{u}{\invpath{\ivsegelt}}
        \end{tikzcd}
        \\
        \begin{tikzcd}
          \bullet_0
          \arrow{r}{\ivsegelt} &
          \bullet_1 \\
          \bullet_0
          \arrow{u}{\bullet_0}
          \arrow{r}{\ivsegelt} &
          \bullet_1
          \arrow{u}{\bullet_1}
        \end{tikzcd}
        &
        \begin{tikzcd}
          \bullet_1
          \arrow{r}{\invpath{\ivsegelt}} &
          \bullet_0 \\
          \bullet_1
          \arrow{u}{\bullet_1}
          \arrow{r}{\invpath{\ivsegelt}} &
          \bullet_0
          \arrow{u}{\bullet_0}
        \end{tikzcd}
        &
        \begin{tikzcd}
          \bullet_0
          \arrow{r}{\ivsegelt} &
          \bullet_1 \\
          \bullet_1
          \arrow{u}{\invpath{\ivsegelt}}
          \arrow{r}{\invpath{\ivsegelt}} &
          \bullet_0
          \arrow{u}{\ivsegelt}
        \end{tikzcd}
        &
        \begin{tikzcd}
          \bullet_1
          \arrow{r}{\invpath{\ivsegelt}} &
          \bullet_0 \\
          \bullet_0
          \arrow{u}{\ivsegelt}
          \arrow{r}{\ivsegelt} &
          \bullet_1
          \arrow{u}{\invpath{\ivsegelt}}
        \end{tikzcd}
        \\
        \begin{tikzcd}
          \bullet_0
          \arrow{r}{\bullet_0} &
          \bullet_0 \\
          \bullet_0
          \arrow{u}{\bullet_0}
          \arrow{r}{\ivsegelt} &
          \bullet_1
          \arrow{u}{\invpath{\ivsegelt}}
        \end{tikzcd}
        &
        \begin{tikzcd}
          \bullet_0
          \arrow{r}{\ivsegelt} &
          \bullet_1 \\
          \bullet_0
          \arrow{u}{\bullet_0}
          \arrow{r}{\bullet_0} &
          \bullet_0
          \arrow{u}{\ivsegelt}
        \end{tikzcd}
        &
        \begin{tikzcd}
          \bullet_0
          \arrow{r}{\bullet_0} &
          \bullet_0 \\
          \bullet_1
          \arrow{u}{\invpath{\ivsegelt}}
          \arrow{r}{\invpath{\ivsegelt}} &
          \bullet_0
          \arrow{u}{\bullet_0}
        \end{tikzcd}
        &
        \begin{tikzcd}
          \bullet_1
          \arrow{r}{\invpath{\ivsegelt}} &
          \bullet_0 \\
          \bullet_0
          \arrow{u}{\ivsegelt}
          \arrow{r}{\bullet_0} &
          \bullet_0
          \arrow{u}{\bullet_0}
        \end{tikzcd}
        \\
        \begin{tikzcd}
          \bullet_1
          \arrow{r}{\bullet_1} &
          \bullet_1 \\
          \bullet_1
          \arrow{u}{\bullet_1}
          \arrow{r}{\invpath{\ivsegelt}} &
          \bullet_0
          \arrow{u}{\ivsegelt}
        \end{tikzcd}
        &
        \begin{tikzcd}
          \bullet_1
          \arrow{r}{\invpath{\ivsegelt}} &
          \bullet_0 \\
          \bullet_1
          \arrow{u}{\bullet_1}
          \arrow{r}{\bullet_1} &
          \bullet_1
          \arrow{u}{\invpath{\ivsegelt}}
        \end{tikzcd}
        &
        \begin{tikzcd}
          \bullet_1
          \arrow{r}{\bullet_1} &
          \bullet_1 \\
          \bullet_0
          \arrow{u}{\ivsegelt}
          \arrow{r}{\ivsegelt} &
          \bullet_1
          \arrow{u}{\bullet_1}
        \end{tikzcd}
        &
        \begin{tikzcd}
          \bullet_0
          \arrow{r}{\ivsegelt} &
          \bullet_1 \\
          \bullet_1
          \arrow{u}{\invpath{\ivsegelt}}
          \arrow{r}{\bullet_1} &
          \bullet_1
          \arrow{u}{\bullet_1}
        \end{tikzcd}
      \end{array}
    \end{array}
  \end{displaymath}
  \caption{Cubes of the First Three Dimensions of $\ivtyp$}
  \label{fig:iv-pts-lines-squares}
\end{figure}

Another possible definition of the interval is by a glueing construction that simply specifies that the two points $0$
and $1$ are to be identified.  Such glueing constructions are usually presented by a pushout construction, which exactly
expresses the identification of specified elements of two disjoint sets.  The pushout method is essentially a special
case of the concept of a \emph{higher inductive definition}~\citep{hott-book}, which allows identifications at all
dimensions to be specified among the elements generated by given points.  For example, the interval is the free
$\omega$-groupoid generated by $\ivzelt$, $\ivoelt$, and $\ivsegelt$~\citep{hott-book}.  The universal property states
that to define a map out of the interval into another type, it suffices to specify its behavior on the points $\ivzelt$
and $\ivoelt$ in such a way that the identification $\ivsegelt$ is sent to an identification of the images of the end
points.  It can be shown that, in the presence of type constructors yet to be formulated in the higher-dimensional case,
this formulation suffices to ensure that the necessary identifications are generated by a higher inductive definition.

The discussion of the interval is intended to raise the question: when is a cubical set a \emph{type}?  One answer is
simple enough, but hardly informative: exactly when it is possible to interpret the rules of type theory into it.  But
it would be pleasing to find some syntax-free criteria for specifying when cubes may be correctly understood as
heterogeneous identifications of their faces in a type.  So far we have argued, rather informally, that the least to be
expected is that identifications be catenable, when compatible, and reversible, along a given dimension, and that the
corresponding groupoid laws hold, at least up to higher identifications.  These conditions are surely
necessary, but are they sufficient?  Another criterion, whose importance will emerge shortly, is \emph{functoriality},
which states that families of types and families of terms should respect identifications of their free variables in an
appropriate sense.  In the case of families of types their interpretation as cubical families of sets implies that
isomorphic sets should be assigned to identified indices.  Finally, another condition, which is not possible to explain
fully just now, will also be required to ensure that the elimination form for the identification type behaves properly
even in the presence of non-trivial higher-dimensional structure.

One may consider that these should be the defining conditions for a cubical set to be a type, but they are distressingly
close to the answer based on the rules of type theory.  \citeauthor{bch} propose that a type is a \emph{uniform Kan
  complex} and that a family of types is a \emph{uniform Kan fibration}.%
\footnote{More precisely, \citeauthor{bch} interpret a family of types into a uniform Kan cubical family of sets, not a
  uniform Kan fibration.}  The formulation of these conditions goes back to pioneering work of Daniel Kan in the 1950's
on the question of when is a cubical set a suitable basis for doing homotopy theory?  Kan gave an elegant condition that
ensures, in one simple criterion, that cubes behave like identifications.  The UKC, introduced by~\citeauthor{bch},
generalizes the Kan condition and enables a constructively valid formulation.  Unfortunately, it is not known whether
the uniform Kan criteria are sufficient to support the interpretation of standard type theory.  In particular, the
definitional equivalence required of the elimination form for the identification type has not been validated by the
model---it holds only up to higher identification.  Still, the UKC comes very close to providing a semantic criterion
for a cubical set to be a type.  The remainder the paper is based on the UKC criterion, which we now describe in greater
detail.

\smallskip  
 
There are two formulations of the Kan condition for cubical sets, one more \emph{geometric} in flavor, the other more
\emph{algebraic}~\citep{williamson2012combinatorial}.  The algebraic formulation is more suitable for implementation
(giving constructive meaning to the concept of a Kan complex), whereas the geometric formulation is historically prior,
and more natural from the perspective of homotopy theory.  What follows is a development of both formulations, and their
relationship to one another.  Specifically, the \emph{homotopy lifting property} proposed by Kan corresponds to the
\emph{uniform box-filling operation} proposed by~\citeauthor{bch}.  The following account differs considerably from that
given by~\citeauthor{bch} in terms of the technical development, but the conceptual foundations remain the same.

Recall that the free-standing $I$-cube, $\fscube{I}$, is defined to be the co-representable cubical set
$\coyoneda{I}=\hom{}{I}{-}:\CC\to\SET$.  Thus,
\begin{align}
  \label{def:fscube}
  \fscube{I}_J & \eqdef \hom{}{I}{J} && (J:\CC) \\
  \fscube{I}_f(g) & \eqdef \ocomp{g}{f} && (f:J\to J',g:I\to J)
\end{align}
A \emph{geometric}, or \emph{external}, $I$-cube in $\cs{X}$ is a morphism $\kappa:\fscube{I}\to\cs{X}$ from the
free-standing $I$-cube into $\cs{X}$.  By the Yoneda Lemma each external $I$-cube, $\kappa$, of $\cs{X}$ determines an
\emph{algebraic}, or \emph{internal}, $I$-cube $\alg{\kappa}\in\cs{X}_I$ given by $\kappa_I\parens{\id}$, and every internal
$I$-cube of $\cs{X}$ arises in this way.  Thus $\cs{X}_I$ may be considered to \emph{represent} all and only the
geometric $I$-cubes of $\cs{X}$.

A \emph{free-standing box} is a cubical set given by a subfunctor of the free-standing cube (that is, a co-sieve)
determined by four parameters:
\begin{enumerate}
\item A set, $I$, specifying the \emph{included} dimensions of the box.
\item A set, $J$, specifying the \emph{extra} dimensions of the box.
\item A dimension, $y\notin I\cup J$, specifying the \emph{filling dimension} of the box.
\item The \emph{polarity} of the box, which specifies the \emph{filling direction}.
\end{enumerate}
A free-standing box is required to have both faces in its included dimensions, and one face in its filling dimension,
the \emph{starting face}, so that it always has an odd number of faces (as few as $1$ when $I=\emp$ and as many as
$2n+1$ when $J=\emp$ and $I$ is of size $n$).  In the standard formulation of cubical sets, such as the one given
by~\cite{williamson2012combinatorial}, there are no extra faces, so that a box may be viewed as a cube with its
interior and one face omitted, namely the opposite face to the starting face in the filling dimension, which is called
the \emph{ending face}, or \emph{composition}, of the filler.  The more general form of box considered given here that
of~\citeauthor{bch}, and gives rise to the need to consider the relationship between the filler of certain faces of a
box and the filler of the whole box---their \emph{uniformity condition} ensures that these coincide.

To be precise, the set of all face maps that are \emph{applicable} to a positive box (as if it were a complete cube),
$\pafm{I}{y}$, with parameters $I$ and $y$ as above, is
\begin{equation}
  \label{def:pafm}
  \pafm{I}{y} \eqdef \union{\thesetst{\spec{i}{b}}{i\in I\ \text{and}\ b\in\fset{\zero,\one}}}{\fset{\spec{y}{\zero}}}
\end{equation}
and for a negative box the set, written $\nafm{I}{y}$, is
\begin{equation}
  \label{def:nafm}
  \nafm{I}{y} \eqdef \union{\thesetst{\spec{i}{b}}{i\in I\ \text{and}\ b\in\fset{\zero,\one}}}{\fset{\spec{y}{\one}}}.
\end{equation}
Intuitively, they represent all the included faces (including the one in the filling dimension).  All the face maps in
these sets are ``polymorphic'' in ambient dimensions in order to simplify various definitions.  For example, the face
map $\parens{\spec{z}{\one}}$ in $\pafm{I}{y}$ for some $z\in I$ can be considered a morphism from $\ext{L}{z}$
to $L$ for any ambient dimensions $L$; in particular, it can be viewed as a morphism from $\bdcomb{I}{J}{y}$
to $\bdcomb{\parens{I\setminus z}}{J}{y}$ for any extra dimensions $J$ by setting $L = \bdcomb{\parens{I\setminus z}}{J}{y}$.
In general, for any dimensions $I$ and $y$, every face map $f\in\pafm{I}{y}$ determines
a set of dimensions $K$ such that $f$ can be seen as a morphism from $\bdcomb{I}{J}{y}$ to $\comb{K}{J}$ for any extra dimensions $J$.
While the extra dimensions $J$ can be inferred from the context, we sometimes write $f_J$ explicitly to indicate
the particular $J$-instance of $f\in\pafm{I}{y}$ with extra dimensions $J$.
All $J$-instances of face maps in $\pafm{I}{y}$ share the same domain $\bdcomb{I}{J}{y}$.

The \emph{positive free-standing box}, $\fspbox{I}{J}{y}$, with parameters $I$, $y$, and $J$ is, as a \emph{co-sieve},
the \emph{saturation} of the $J$-instances of face maps in $\pafm{I}{y}$.
A \emph{co-sieve} is a subfunctor (that is, a subobject in the functor category) of a co-representable functor,
in this case of the free-standing cube with dimensions $\bdcomb{I}{J}{y}$ defined by Equation~\eqref{def:fscube}.
The \emph{saturation} of a set of morphisms $S$ sharing the same domain, intuitively, is the closure of $S$
under post-composition with arbitrary morphisms.  More precisely, being a co-sieve, the saturation of the set $S$
sends dimensions $K$ to the set in which each element is some morphism in $S$ post-composed with some morphism targeted at $K$,
and acts functorially by post-composition.
With these definitions expanded, the positive free-standing box $\fspbox{I}{J}{y}$, as a cubical set,
was defined by the following equations:
\begin{align}
  \label{def:fspbox1}
  \parens{\fspbox{I}{J}{y}}_K & \eqdef
  \thesetst{f:\bdcomb{I}{J}{y}\to K}{f = \ocomp{g_J}{h}\ \text{for some}\ g\in\pafm{I}{y}\ \text{and}\ h}\\
  \label{def:fspbox2}
  \parens{\fspbox{I}{J}{y}}_f\parens{g} & \eqdef \ocomp{g}{f}  \qquad\qquad (f:K\to K', g:\bdcomb{I}{J}{y}\to K).
\end{align}
The \emph{negative} free-standing box, $\fsnbox{I}{J}{y}$, is defined similarly,
albeit the set $\pafm{I}{y}$ replaced by the set $\nafm{I}{y}$.
The \emph{standard positive (resp., negative) free-standing box}, $\fspbox{I}{\emp}{y}$ (resp., $\fsnbox{I}{\emp}{y}$), disallows any
extra dimensions~\citep{williamson2012combinatorial}; the only omitted face is that opposite to the starting face of the
box.

Just as the free-standing cube may be used to specify a geometric cube in $\cs{X}$, a free-standing box may be used to
specify a geometric box in $\cs{X}$.  Specifically, a \emph{positive (resp., negative) geometric box} in a cubical set
$\cs{X}$ is a morphism $\beta:\fspbox{I}{J}{y}\to\cs{X}$ (resp., $\beta:\fsnbox{I}{J}{y}\to\cs{X}$).  From this arises
the \emph{geometric box projection} which projects out a positive (resp., negative) geometric box from a geometric cube $\kappa$
by pre-composition with the inclusion of the free-standing box into the free-standing cube.
\begin{equation*}
  \begin{tikzcd}
    \fspbox{I}{J}{y} \arrow[bend left=40]{rr}{\beta} \arrow[hook]{r}{\iota_0}
    & \fscube{\ext{I}{y}} \arrow{r}{\kappa} & \cs{X}
    &
    \fsnbox{I}{J}{y} \arrow[bend left=40]{rr}{\beta} \arrow[hook]{r}{\iota_1}
    & \fscube{\ext{I}{y}} \arrow{r}{\kappa} & \cs{X}
  \end{tikzcd}
\end{equation*}
The \emph{standard geometric Kan condition} (that is, for boxes with no extra dimensions) for a cubical set $\cs{X}$ states that every
standard geometric box $\beta$ in $\cs{X}$ may be \emph{filled}, or \emph{completed}, to a geometric cube $\kappa$ in
$\cs{X}$ by extending $\beta$ along the inclusion of the free-standing box into the free-standing cube; see
Figure~\ref{fig:gkc}.  This condition amounts to saying that the geometric box projection, restricted to empty $J$, has a
section.  In topological spaces the lifting property holds because the topological cube may be retracted onto any of its
standard contained boxes.  A cubical set satisfying that geometric Kan condition is a \emph{standard geometric Kan
  complex}.
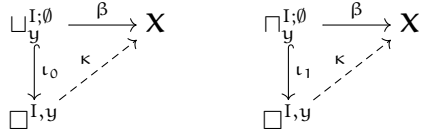
\begin{figure}[ht]
  \centering
  \begin{tikzcd}
    \fspbox{I}{\emp}{y} \arrow{r}{\beta} \arrow[hook]{d}{\iota_0} & \cs{X} \\
    \fscube{\ext{I}{y}} \arrow[dashed]{ur}{\kappa}
  \end{tikzcd}
  \qquad
  \begin{tikzcd}
    \fsnbox{I}{\emp}{y} \arrow{r}{\beta} \arrow[hook]{d}{\iota_1} & \cs{X} \\
    \fscube{\ext{I}{y}} \arrow[dashed]{ur}{\kappa}
  \end{tikzcd}
  \caption{Standard Geometric Kan Condition}
  \label{fig:gkc}
\end{figure}

When extra dimensions $J$ are allowed, the standard geometric Kan condition should be generalized to the \emph{uniform
geometric Kan condition}, which not only fills any geometric box but also relates the filler of
any $\ext{I}{y}$-preserving cubical aspect of a box to the same aspect of the filler.%
\footnote{Note that the original uniformity condition proposed by~\cite{bch} also demands the fillings to respect permutations
of included dimensions, which is implicit in our presentation because we assume $\alpha$-equivalence everywhere.}
Let the \emph{augmentation} of a morphism $h:J\to J'$ with dimensions $I$ distinct from $J$ and $J'$, written $\comb{I}{h}$,
be a morphism from $\comb{I}{J}$ to $\comb{I}{J'}$ which sends $i\in I$ to $i$ and $j\in J$ to $h(j)$.
The free-standing boxes ($\fspbox{I}{J}{y}$ and $\fsnbox{I}{J}{y}$) and free-standing
cubes ($\fscube{\bdcomb{I}{J}{y}}$) all behave functorially in $J$,
in that the functorial action of a morphism $h:J\to J'$ is pre-composition with the augmentation
$\bdcomb{I}{h}{y}:\bdcomb{I}{J}{y}\to\bdcomb{I}{J'}{y}$.
The uniformity means that the filling operation
from $\beta$ to $\kappa$ is natural in $J$ in the sense of the commutative diagrams given in Figure~\ref{fig:uni-gkc} in
which $h:J\to J'$ is an arbitrary morphism from $J$ to $J'$ in $\CC$.
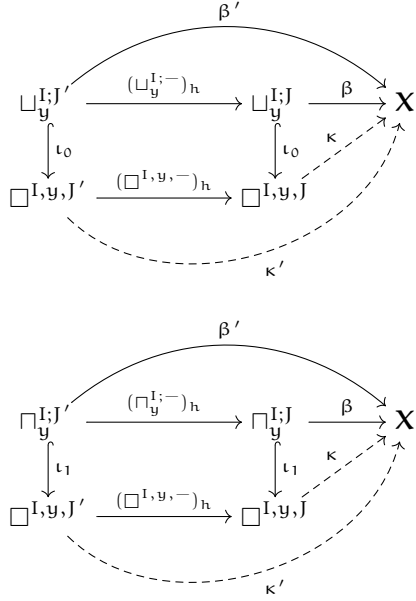
\begin{figure}[tp]
  \centering
  \begin{tikzcd}
    \fspbox{I}{J'}{y}
    \arrow{rr}{\parens{\fspbox{I}{-}{y}}_h}
    \arrow[hook]{d}{\iota_0}
    \arrow[bend left=40]{rrr}{\beta'}
    &&
    \fspbox{I}{J}{y} \arrow{r}{\beta}
    \arrow[hook]{d}{\iota_0}
    &
    \cs{X}
    \\
    \fscube{\bdcomb{I}{J'}{y}}
    \arrow{rr}{\parens{\fscube{\bdcomb{I}{-}{y}}}_h}
    \arrow[bend right=60, dashed]{rrru}[swap]{\kappa'}
    &&
    \fscube{\bdcomb{I}{J}{y}}
    \arrow[dashrightarrow]{ur}{\kappa}
  \end{tikzcd}
  \qquad
  \begin{tikzcd}
    \fsnbox{I}{J'}{y}
    \arrow{rr}{\parens{\fsnbox{I}{-}{y}}_h}
    \arrow[hook]{d}{\iota_1}
    \arrow[bend left=40]{rrr}{\beta'}
    &&
    \fsnbox{I}{J}{y} \arrow{r}{\beta}
    \arrow[hook]{d}{\iota_1}
    &
    \cs{X}
    \\
    \fscube{\bdcomb{I}{J'}{y}}
    \arrow{rr}{\parens{\fscube{\bdcomb{I}{-}{y}}}_h}
    \arrow[bend right=60, dashed]{rrru}[swap]{\kappa'}
    &&
    \fscube{\bdcomb{I}{J}{y}}
    \arrow[dashed]{ur}{\kappa}
  \end{tikzcd}
  \caption{Uniform Geometric Kan Condition}
  \label{fig:uni-gkc}
\end{figure}

To obtain a more combinatorial formulation of the Kan
structure, it is useful to formulate an algebraic representation of geometric boxes, much as the Yoneda Lemma provides
an algebraic representation of geometric cubes.  Writing $\algpbox{\cs{X}}{I}{J}{y}$ for a suitably algebraic
representation of \emph{positive algebraic boxes} in $\cs{X}$, the intention is that there be a bijection
\begin{equation}
  \label{eq:pb}
  \hhpb{I}{J}{y} : \hom{\CSET}{\fspbox{I}{J}{y}}{\cs{X}} \cong \algpbox{\cs{X}}{I}{J}{y},
\end{equation}
and, analogously, for there to be a suitably algebraic representation $\algnbox{\cs{X}}{I}{J}{y}$ of the set of
\emph{negative algebraic boxes} for which there is a bijection
\begin{equation}
  \label{eq:nb}
  \hhnb{I}{J}{y} : \hom{\CSET}{\fsnbox{I}{J}{y}}{\cs{X}} \cong \algnbox{\cs{X}}{I}{J}{y}.
\end{equation}
Moreover, these bijections should be natural in $J$.  These bijections should be compared to the one given by the Yoneda
Lemma for the free-standing cubes,
\begin{equation}
  \label{eq:c}
  \coyoneda{I} : \hom{\CSET}{\fscube{I}}{\cs{X}} \cong \cs{X}_I,
\end{equation}
which is natural in $I$.

To derive a suitable definition for $\algpbox{\cs{X}}{I}{J}{y}$ and $\algnbox{\cs{X}}{I}{J}{y}$, it is helpful to review
the definition of the free-standing boxes on given shape parameters $I$ and $y$ given by
Equations~\eqref{def:fspbox1} and~\eqref{def:fspbox2}.  Because the free-standing box $\fspbox{I}{J}{y}$ is the saturation
of (the $J$-instances in) $\pafm{I}{y}$, an algebraic representation of a positive geometric box $\beta :
\fspbox{I}{J}{y} \to \cs{X}$ may be expected to be determined by a family of (lower-dimensional) cubes $\alg{\beta}_f =
\beta_{\cod{f_J}}\parens{f_J}$ for each $f \in \pafm{I}{y}$.  We say two morphisms $f_1$ and $f_2$ are
\emph{reconcilable} if $\ocomp{f_1}{g_1} = \ocomp{f_2}{g_2}$ for some $g_1$ and $g_2$.
Two polymorphic face maps $f_1$ and $f_2$ are reconcilable if their compatible instances are reconcilable.
The naturality of $\beta$ guarantees the following:
\begin{equation}
  \label{eq:algbox:naive}
  \text{For any}\ f_1, f_2 \in \pafm{I}{y}\ \text{reconcilable by $g_1$ and $g_2$,}\ \cs{X}_{g_1}\parens{\alg{\beta}_{f_1}} = \cs{X}_{g_2}\parens{\alg{\beta}_{f_2}}.
\end{equation}
That is, if two cubes, $f_1$ and $f_2$, in the free-standing cube $\fscube{\bdcomb{I}{J}{y}}$
have coincident aspects $g_1$ and $g_2$, then so does the box $\beta$.
It turns out that Equation~\eqref{eq:algbox:naive} is the critical condition
to make the family of cubes $\cs{\beta}$ qualified as a box.

Because $\pafm{I}{y}$ consists solely of face maps, Equation~\eqref{eq:algbox:naive} can be further restricted;
it is not necessary to check all possible $g_1$'s and $g_2$'s.
We say two face maps $f_1$ and $f_2$ are \emph{orthogonal} if $\ocomp{f_2}{f_1} = \ocomp{f_1}{f_2}$ holds,
or equivalently they are reconcilable by some instances of $f_2$ and $f_1$.%
\footnote{Note that two occurrences $f_1$ in the equation $\ocomp{f_2}{f_1} = \ocomp{f_1}{f_2}$
  refer to different instances of the polymorphic face maps $f_1$.  So does $f_2$.}
  As we shall see, Equation~\eqref{eq:algbox:naive} is equivalent to the following \emph{adjacency condtion for positive boxes}:
\begin{equation}
  \label{def:adjacency:pos}
  \text{For any orthogonal}\ f_1, f_2 \in \pafm{I}{y},\ \cs{X}_{f_2}\parens{\alg{\beta}_{f_1}} = \cs{X}_{f_1}\parens{\alg{\beta}_{f_2}}.
\end{equation}
The intuition is that, given several cubes that should be faces of some cube,
any attaching part must be shared by at least two faces-to-be, and so it is sufficient to check whether any two
faces-to-be fit together.

Equation~\eqref{def:adjacency:pos} is a special case of Equation~\eqref{eq:algbox:naive}
where $g_1$ and $g_2$ are restricted to instances of $f_2$ and $f_1$, respectively.
It suffices to show that Equation~\eqref{def:adjacency:pos} implies Equation~\eqref{eq:algbox:naive}.
Assuming Equation~\eqref{def:adjacency:pos}
and there are two polymorphic face maps $f_1$ and $f_2$ in $\pafm{I}{y}$
that are reconcilable by some $g_1$ and $g_2$,
the goal is to show that $\cs{X}_{g_1}\parens{\alg{\beta}_{f_1}} = \cs{X}_{g_2}\parens{\alg{\beta}_{f_2}}$.
Recall that any morphism admits a canonical form that consist of face maps, renamings, and degeneracies, in that order.
Here consider a canonical form of the morphism $\ocomp{f_1}{g_1} = \ocomp{f_2}{g_2}$.
If $f_1$ and $f_2$ are the same face map,
then the canonical form implies that $g_1$ and $g_2$ are also the same,
which implies $\cs{X}_{g_1}\parens{\alg{\beta}_{f_1}} = \cs{X}_{g_2}\parens{\alg{\beta}_{f_2}}$ immediately.
Without loss of generality, suppose $f_1$ and $f_2$ are different face maps.
Because both $f_1$ and $f_2$ appear in the canonical form of $\ocomp{f_1}{g_1} = \ocomp{f_2}{g_2}$,
they must be orthogonal;
by Equation~\eqref{def:adjacency:pos} $\cs{X}_{f_2}\parens{\alg{\beta}_{f_1}} = \cs{X}_{f_1}\parens{\alg{\beta}_{f_2}}$.
Moreover, the canonical forms of $g_1$ and $g_2$ must differ by exactly one face map,
where $f_1$ is missing in $g_1$ and $f_2$ is missing in $g_2$,
which is to say that $g_1$ and $g_2$ factors through a ``common morphism'' $g_{1,2}$ such that
\begin{gather}
  g_1 = \ocomp{f_2}{g_{1,2}}
  \\
  g_2 = \ocomp{f_1}{g_{1,2}}.
\end{gather}
By functoriality of $\cs{X}$,
\begin{equation}
  \cs{X}_{g_1}\parens{\alg{\beta}_{f_1}}
  =
  \cs{X}_{g_{1,2}}\parens{\cs{X}_{f_2}\parens{\alg{\beta}_{f_1}}}
  =
  \cs{X}_{g_{1,2}}\parens{\cs{X}_{f_1}\parens{\alg{\beta}_{f_2}}}
  =
  \cs{X}_{g_2}\parens{\alg{\beta}_{f_2}}.
\end{equation}

\smallskip

More precisely, a positive algebraic box with parameters $I$, $y$, and $J$ is defined to be a family
of cubes $\alg{\beta}_f \in \cs{X}_{\cod{f_J}}$ indexed by the face maps $f \in \pafm{I}{y}$
from dimensions $\bdcomb{I}{J}{y}$ to lower dimensions
such that the adjacency condition~\eqref{def:adjacency:pos} holds.
A negative algebraic box is defined similarly, except that every $\pafm{I}{y}$ is replaced by $\nafm{I}{y}$.
The set $\algpbox{X}{I}{J}{y}$ (resp., $\algnbox{X}{I}{J}{y}$) of \emph{positive} (resp., \emph{negative})
\emph{algebraic $\ext{I}{y}$-shaped boxes with extra dimensions $J$} is defined to be the collection of all such families of cubes
$\famset{\alg{\beta}_f}{f\in\pafm{I}{y}}$ (resp., $\famset{\alg{\beta}_f}{f\in\nafm{I}{y}}$).

The definition of a positive algebraic box with extra dimensions $J$ may be extended to a functor in $J$, giving rise to
the cubical set $\algpbox{X}{I}{-}{y}$ of all $\ext{I}{y}$-shaped boxes in $\cs{X}$:
\begin{align}
  \parens{\algpbox{X}{I}{-}{y}}_J &\eqdef{} \algpbox{X}{I}{J}{y}  && (J:\CC) \\
  \parens{\algpbox{X}{I}{-}{y}}_h(\cs{\beta})_f &\eqdef{} \parens{\cs{X}_{\parens{\comb{K}{-}}}}_h \parens{\alg{\beta}_f} && (h:J\to J', \parens{f : \bdcomb{I}{J''}{y} \to \comb{K}{J''}} \in \pafm{I}{y})
\end{align}
An analogous definition may be given of the cubical set $\algnbox{\cs{X}}{I}{-}{y}$ by replacing $\pafm{I}{y}$ by
$\nafm{I}{y}$ in the specification of its functorial action.  Observe that in both cases the functorial action ``maps''
the action of $h$ over every cube in the collection of cubes that compose an algebraic box.

The \emph{positive algebraic box projection} is a morphism of cubical sets
\begin{displaymath}
  \paproj{I}{J}{y}:\cs{X}_{\bdcomb{I}{J}{y}}\to\algpbox{\cs{X}}{I}{J}{y}
\end{displaymath}
that sends $\alg{\kappa}\in\cs{X}_{\bdcomb{I}{J}{y}}$ to $\famset{\cs{X}_f\parens{\kappa}}{f\in\pafm{I}{y}}$ by
selecting from the cube the boundary faces determined by the shape of the box.  Similarly, the \emph{negative algebraic
  box projection}
\begin{displaymath}
  \naproj{I}{J}{y}:\cs{X}_{\bdcomb{I}{J}{y}}\to\algnbox{\cs{X}}{I}{J}{y}
\end{displaymath}
sends $\alg{\kappa}\in\cs{X}_{\bdcomb{I}{J}{y}}$ to $\famset{\cs{X}_f\parens{\kappa}}{f\in\nafm{I}{y}}$.

A \emph{positive (resp., negative) uniform algebraic filling operation} is a \emph{section} of a \emph{positive (resp.,
  negative) algebraic box projection}:
\begin{displaymath}
  \pfill{X}{I}{J}{y} : \algpbox{X}{I}{J}{y}\to\cs{X}_{\bdcomb{I}{J}{y}},
\end{displaymath}
resp.,
\begin{displaymath}
  \nfill{X}{I}{J}{y} : \algnbox{\cs{X}}{I}{J}{y}\to\cs{X}_{\bdcomb{I}{J}{y}}.
\end{displaymath}
The filling operations choose an algebraic cube that fills each algebraic box in $\cs{X}$, naturally in the extra
dimensions $J$.  The \emph{uniform algebraic Kan condition} for a cubical set $\cs{X}$ states that such uniform filling
operations exist for $\cs{X}$.

\paragraph{Equivalence between algebraic and geometric uniform Kan structures.}
Now that we have phrased the uniform Kan structure, which consists of the cubes, the boxes,
the box projections, and the uniform filling operations, in an algebraic manner,
it is important to show that the algebraic and geometric formulations are equivalent,
in the sense that the cubes, the boxes, and the box projections all match up
and there is an equivalence between uniform filling operations in two formulations.
The algebraic and geometric cubes ($\cs{X}$ and $\hom{\CSET}{\fscube{-}}{X}$) are already
aligned by the Yoneda Lemma.  What is lacking are the natural bijections
between algebraic and geometric boxes (as expressed by \eqref{eq:pb}~and~\eqref{eq:nb})
such that the box projections and the uniform filling operations are related.
The equivalence is summarized in Figure~\ref{fig:gkc-akc}.

\begin{figure}[ht]
  \centering
  \begin{tikzcd}[
      bijection/.style={Leftrightarrow}
    ]
    &
    \algpbox{X}{I}{J}{y}
    \arrow{rr}{\parens{\algpbox{X}{I}{-}{y}}_h}
    \arrow[dashed, bend left]{dd}[near start]{\pfill{}{I}{J}{y}}
    \arrow[leftarrow]{dd}[swap, near start]{\paproj{I}{J}{y}}
    & &
    \algpbox{X}{I}{J'}{y}
    \arrow[dashed, bend left]{dd}[near start]{\pfill{}{I}{J'}{y}}
    \arrow[leftarrow]{dd}[near end, swap]{\paproj{I}{J'}{y}}
    \\
    \hom{\CSET}{\fspbox{I}{J}{y}}{\cs{X}}
    \arrow[bijection]{ur}{\hhp}
    \arrow[crossing over]{rr}[very near start, swap, yshift=-0.3em]{\hom{\CSET}{\fspbox{I}{-}{y}}{\cs{X}}_h}
    \arrow[dashed, bend left]{dd}[pos=0.45]{\plift{}{I}{J}{y}}
    \arrow[leftarrow]{dd}[near end, swap]{\ocomp{\iota_0}{-}}
    & &
    \hom{\CSET}{\fspbox{I}{J'}{y}}{\cs{X}}
    \arrow[bijection]{ur}{\hhp}
    \\
    &
    \cs{X}_{\bdcomb{I}{J}{y}}
    \arrow{rr}[near start]{\parens{\cs{X}_{\parens{\bdcomb{I}{-}{y}}}}_h}
    & &
    \cs{X}_{\bdcomb{I}{J'}{y}}
    \\
    \hom{\CSET}{\fscube{\bdcomb{I}{J}{y}}}{\cs{X}}
    \arrow[bijection]{ur}{\coyoneda{}}
    \arrow{rr}{\hom{\CSET}{\fscube{\bdcomb{I}{-}{y}}}{\cs{X}}_h}
    & &
    \hom{\CSET}{\fscube{\bdcomb{I}{J'}{y}}}{\cs{X}}
    \arrow[bijection]{ur}{\coyoneda{}}
    \arrow[leftarrow, dashed, crossing over, bend right]{uu}[swap, near end]{\plift{X}{I}{J}{y}}
    \arrow[crossing over]{uu}[near start]{\ocomp{\iota_0}{-}}
  \end{tikzcd}

  \bigskip

  \begin{tabular}{lllll}
    \textit{Elements} & \textit{Geometric} & \textit{Algebraic} & \textit{Theorem} & \textit{Reference}
    \\
    \midrule
    Cubes & $\fscube{I} \to \cs{X}$       & $\cs{X}_I$             & Bijection & Yoneda
    \\
    & & & Naturality (see boxes) & Ext. to Yoneda
    \\
    Positive Boxes & $\fspbox{I}{J}{y} \to \cs{X}$ & $\algpbox{X}{I}{J}{y}$ & Bijection & Prop.~\ref{prop:box:biject}
    \\
    & & & Naturality in $J$ & Prop.~\ref{prop:box:biject:nat}.
    \\
    Negative Boxes & $\fsnbox{I}{J}{y} \to \cs{X}$ & $\algnbox{X}{I}{J}{y}$ & Bijection & Prop.~\ref{prop:box:biject}
    \\
    & & & Naturality in $J$ & Prop.~\ref{prop:box:biject:nat}.
    \\
    Pos.\ Box Projection & $\ocomp{\iota_0}{-}$ & $\paproj{I}{J}{y}$ & Identity & Prop.~\ref{prop:boxproj}
    \\
    Neg.\ Box Projection & $\ocomp{\iota_1}{-}$ & $\naproj{I}{J}{y}$ & Identity & Prop.~\ref{prop:boxproj}
    \\
    Pos.\ Uniform Filling & $\plift{X}{I}{J}{y}$ & $\pfill{X}{I}{J}{y}$ & Type Equivalence & Prop.~\ref{thm:gkc-akc}
    \\
    Neg.\ Uniform Filling & $\nlift{X}{I}{J}{y}$ & $\nfill{X}{I}{J}{y}$ & Type Equivalence & Prop.~\ref{thm:gkc-akc}
  \end{tabular}
  \caption{Equivalence between Algebraic and Geometric Kan Structures}
  \label{fig:gkc-akc}
\end{figure}

The natural bijections between geometric and algebraic boxes are pairs of functions, one computing (algebraic) nerve of
a geometric box, written $\pnerve{X}{I}{J}{y}$ and $\nnerve{X}{I}{J}{y}$ for their positive and negative forms, and the
other computing the (geometric) realization of an algebraic box, written $\prealize{X}{I}{J}{y}$ and
$\nrealize{X}{I}{J}{y}$ for their positive and negative forms.  The types of these functions are as follows:
\begin{align*}
  \pnerve{X}{I}{J}{y} &: \hom{\CSET}{\fspbox{I}{J}{y}}{\cs{X}} \to \algpbox{X}{I}{J}{y}
  \\
  \nnerve{X}{I}{J}{y} &: \hom{\CSET}{\fsnbox{I}{J}{y}}{\cs{X}} \to \algnbox{X}{I}{J}{y}
  \\[2ex]
  \prealize{X}{I}{J}{y} &: \algpbox{X}{I}{J}{y} \to \hom{\CSET}{\fspbox{I}{J}{y}}{\cs{X}}
  \\
  \nrealize{X}{I}{J}{y} &: \algnbox{X}{I}{J}{y} \to \hom{\CSET}{\fsnbox{I}{J}{y}}{\cs{X}}
\end{align*}
They are defined by the following equations:
\begin{align}
  \pnerve{X}{I}{J}{y}\parens{\beta}_f &\eqdef{} \beta_{\cod{f}}\parens{f}
  \qquad\qquad (f\in\pafm{I}{y})
  \\
  \nnerve{X}{I}{J}{y}\parens{\beta}_f &\eqdef{} \beta_{\cod{f}}\parens{f}
  \qquad\qquad (f\in\nafm{I}{y})
  \\[2ex]
  \parens{\prealize{X}{I}{J}{y}\parens{\alg{\beta}}}_K\parens{f} &\eqdef{} \cs{X}_{f_2}\parens{\alg{\beta}_{f_1}}
  \qquad\qquad (f:\parens{\bdcomb{I}{J}{y}\to K}, f=\ocomp{f_1}{f_2}, f_1\in\pafm{I}{y})
  \\
  \parens{\nrealize{X}{I}{J}{y}\parens{\alg{\beta}}}_K\parens{f} &\eqdef{} \cs{X}_{f_2}\parens{\alg{\beta}_{f_1}}
  \qquad\qquad (f:\parens{\bdcomb{I}{J}{y}\to K}, f=\ocomp{f_1}{f_2}, f_1\in\nafm{I}{y})
\end{align}

\begin{proposition}
  \label{prop:nerve-welldef}
  The function $\pnerve{X}{I}{J}{y}$ define valid positive algebraic boxes in the sense that the adjacency
  condition~\eqref{def:adjacency:pos} holds.
  So does $\nnerve{X}{I}{J}{y}$ except $\pafm{I}{y}$ is replaced by $\nafm{I}{y}$.
\end{proposition}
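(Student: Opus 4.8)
The plan is to unfold the definition of $\pnerve{X}{I}{J}{y}$ and reduce the adjacency condition~\eqref{def:adjacency:pos} to two applications of the naturality of $\beta$, glued together by the orthogonality hypothesis. Write $D \eqdef \bdcomb{I}{J}{y}$ for the common domain, and for each $f \in \pafm{I}{y}$ write $f_J : D \to C_f$ for its $J$-instance, where $C_f \eqdef \cod{f_J}$. A preliminary observation --- needed only to see that $\pnerve{X}{I}{J}{y}(\beta)_f = \beta_{C_f}(f_J)$ even typechecks --- is that $f_J$ is itself an element of $(\fspbox{I}{J}{y})_{C_f}$; this is witnessed by Equation~\eqref{def:fspbox1}, taking $g = f$ and $h = \id$. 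More generally, since the free-standing box is a co-sieve, hence closed under post-composition, $\ocomp{f_J}{g} \in (\fspbox{I}{J}{y})_K$ for every $g : C_f \to K$.

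Next I would fix orthogonal face maps $f_1, f_2 \in \pafm{I}{y}$. Reading the orthogonality equation with its inner maps instantiated at $D$, we get a common codomain $L$ together with instances $(f_2)' : C_{f_1} \to L$ and $(f_1)' : C_{f_2} \to L$ such that $\ocomp{(f_1)_J}{(f_2)'} = \ocomp{(f_2)_J}{(f_1)'}$ as morphisms $D \to L$; call this common morphism $\phi$. Applying the naturality square of $\beta$ (as a natural transformation $\fspbox{I}{J}{y} \to \cs{X}$ of functors $\CC \to \SET$) at $(f_2)' : C_{f_1} \to L$ and evaluating at $(f_1)_J \in (\fspbox{I}{J}{y})_{C_{f_1}}$ gives, using Equation~\eqref{def:fspbox2} for the middle step,
\[
  \cs{X}_{(f_2)'}\parens{\beta_{C_{f_1}}\parens{(f_1)_J}} = \beta_L\parens{(\fspbox{I}{J}{y})_{(f_2)'}\parens{(f_1)_J}} = \beta_L\parens{\ocomp{(f_1)_J}{(f_2)'}} = \beta_L(\phi).
\]
The first expression here is precisely the left side $\cs{X}_{f_2}\parens{\alg{\beta}_{f_1}}$ of~\eqref{def:adjacency:pos}, with $\cs{X}_{f_2}$ read at the instance whose domain is $C_{f_1} = \cod{(f_1)_J}$ --- which is exactly the $(f_2)'$ just produced. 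The symmetric computation, applying naturality at $(f_1)' : C_{f_2} \to L$ and evaluating at $(f_2)_J$, yields $\cs{X}_{f_1}\parens{\alg{\beta}_{f_2}} = \beta_L(\phi)$ as well; equating the two establishes~\eqref{def:adjacency:pos}, so $\pnerve{X}{I}{J}{y}(\beta)$ is a positive algebraic box. (The very same two-step computation in fact applies verbatim to any \emph{reconcilable} pair, directly yielding the stronger Equation~\eqref{eq:algbox:naive}.) For the negative statement, the argument used nothing about $\pafm{I}{y}$ beyond the facts that its $J$-instances share the domain $D$ and lie in the free-standing box, so it goes through word for word with $\nafm{I}{y}$ and $\fsnbox{I}{J}{y}$ in place of $\pafm{I}{y}$ and $\fspbox{I}{J}{y}$.

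I do not expect any real difficulty here: the whole proof is two naturality squares and one substitution. The only point demanding care --- and the only place it could go subtly wrong --- is the bookkeeping of the polymorphic face maps: one must pin down which instance of $f_1$ and of $f_2$ appears in each position. Concretely, the orthogonality relation $\ocomp{f_2}{f_1} = \ocomp{f_1}{f_2}$ must be read with its inner occurrences at domain $D$ (so that the outer instances $(f_1)'$ and $(f_2)'$ are uniquely determined), and one must then check that these coincide with the instances of $\cs{X}_{f_1}$ and $\cs{X}_{f_2}$ required in~\eqref{def:adjacency:pos}, namely those with domains $\cod{(f_2)_J}$ and $\cod{(f_1)_J}$ respectively. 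Once this matching is set up, the chain of equalities above is forced.
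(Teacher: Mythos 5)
Your proposal is correct and is essentially the paper's own proof: both arguments run the naturality square of $\beta$ together with the functorial action of the co-sieve on each of the two sides of the adjacency condition, meeting in the middle at $\beta$ applied to the common composite $\ocomp{f_1}{f_2}=\ocomp{f_2}{f_1}$. Your extra care with the $J$-instances of the polymorphic face maps, and the aside that the same computation handles arbitrary reconcilable pairs, are welcome but do not change the route.
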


\begin{proof}
  We only deal with positive boxes.
  Let $\beta : \fspbox{I}{J}{y} \to X$ be a geometric box
  and $\alg{\beta}$ be $\pnerve{X}{I}{J}{y}\parens{\beta}$.
  For any $f_1, f_2 \in \pafm{I}{y}$ such that $\ocomp{f_1}{f_2} = \ocomp{f_2}{f_1} = g$,
  \begin{align*}
    \cs{X}_{f_2}\parens{\alg{\beta}_{f_1}}
    &=
    \cs{X}_{f_2}\parens{\beta_{\cod{f_1}}\parens{f_1}}
    &&\text{(definition of $\alg{\beta} = \pnerve{X}{I}{J}{y}\parens{\beta}$)}
    \\&=
    \beta_{\cod{g}}\parens{\parens{\fspbox{I}{J}{y}}_{f_2}\parens{f_1}}
    &&\text{(naturality of $\beta$)}
    \\&=
    \beta_{\cod{g}}\parens{\ocomp{f_1}{f_2}}
    &&\text{(functorial action of co-sieves)}
    \\&=
    \beta_{\cod{g}}\parens{\ocomp{f_2}{f_1}}
    &&\text{(assumption)}
    \\&=
    \beta_{\cod{g}}\parens{\parens{\fspbox{I}{J}{y}}_{f_1}\parens{f_2}}
    &&\text{(functorial action of co-sieves)}
    \\&=
    \cs{X}_{f_1}\parens{\beta_{\cod{f_2}}\parens{f_2}}
    &&\text{(naturality of $\beta$)}
    \\&=
    \cs{X}_{f_1}\parens{\alg{\beta}_{f_2}}
    &&\text{(definition of $\alg{\beta} = \pnerve{X}{I}{J}{y}$)}
  \end{align*}
  and thus $\alg{\beta}$ is a positive algebraic box.
\end{proof}

\begin{proposition}
  \label{prop:realize-welldef}
  The functions $\prealize{X}{I}{J}{y}$ and $\nrealize{X}{I}{J}{y}$ are well-defined.
\end{proposition}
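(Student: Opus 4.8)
The plan is to check the two things that ``well-defined'' asks of $\prealize{X}{I}{J}{y}$: first, that for a fixed algebraic box $\alg{\beta}$ the value $\cs{X}_{f_2}\parens{\alg{\beta}_{f_1}}$ prescribed by the defining equation depends only on $f$ and not on the chosen factorization $f=\ocomp{f_1}{f_2}$ with $f_1\in\pafm{I}{y}$; and second, that the resulting family of functions is natural, so that it really is an element of $\hom{\CSET}{\fspbox{I}{J}{y}}{\cs{X}}$. As usual I only treat the positive case, the negative one being word-for-word the same with $\pafm{I}{y}$ replaced by $\nafm{I}{y}$.

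Existence of a factorization of the required shape is just unfolding the definition: by Equation~\eqref{def:fspbox1} every element of $\parens{\fspbox{I}{J}{y}}_K$ is $\ocomp{g_J}{h}$ for some $g\in\pafm{I}{y}$, which already exhibits it as $\ocomp{f_1}{f_2}$ with $f_1=g_J$. For independence, suppose $f=\ocomp{f_1}{f_2}=\ocomp{f_1'}{f_2'}$ with $f_1,f_1'\in\pafm{I}{y}$; then $f_1$ and $f_1'$ are reconcilable by $f_2$ and $f_2'$, so Equation~\eqref{eq:algbox:naive} gives $\cs{X}_{f_2}\parens{\alg{\beta}_{f_1}}=\cs{X}_{f_2'}\parens{\alg{\beta}_{f_1'}}$, which is exactly what is needed. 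This is the only step that uses anything about $\alg{\beta}$: it relies on \eqref{eq:algbox:naive} holding for every algebraic box, which in turn follows from the adjacency condition~\eqref{def:adjacency:pos} built into the definition, the equivalence of \eqref{eq:algbox:naive} and \eqref{def:adjacency:pos} having been proved above by the canonical-form argument. In effect the substance of the proposition has already been extracted there; what is left below is bookkeeping.

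For naturality, fix $f:K\to K'$ in $\CC$ and $g\in\parens{\fspbox{I}{J}{y}}_K$, and pick a factorization $g=\ocomp{g_1}{g_2}$ with $g_1\in\pafm{I}{y}$. Since $\ocomp{g}{f}=\ocomp{g_1}{\parens{\ocomp{g_2}{f}}}$ is again a factorization with first component in $\pafm{I}{y}$, the defining equation and functoriality of $\cs{X}$ give
\begin{align*}
  \parens{\prealize{X}{I}{J}{y}\parens{\alg{\beta}}}_{K'}\parens{\ocomp{g}{f}}
  &= \cs{X}_{\ocomp{g_2}{f}}\parens{\alg{\beta}_{g_1}}
  \\ &= \cs{X}_f\parens{\cs{X}_{g_2}\parens{\alg{\beta}_{g_1}}}
  \\ &= \cs{X}_f\parens{\parens{\prealize{X}{I}{J}{y}\parens{\alg{\beta}}}_{K}\parens{g}},
\end{align*}
the last equality being the defining equation applied to $g$, now legitimate by the independence just established. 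Since $\parens{\fspbox{I}{J}{y}}_f\parens{g}=\ocomp{g}{f}$ by Equation~\eqref{def:fspbox2}, this is precisely the naturality square, so $\prealize{X}{I}{J}{y}\parens{\alg{\beta}}$ is a morphism of cubical sets. The only place requiring any care is the independence claim, and even there the real work is the already-established passage from \eqref{def:adjacency:pos} to \eqref{eq:algbox:naive}; once that is granted there is no genuine obstacle, and the same argument applies verbatim to $\nrealize{X}{I}{J}{y}$.
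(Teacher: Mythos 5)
Your proof is correct and follows essentially the same route as the paper's: existence of a factorization from the definition of the co-sieve, independence of the chosen factorization via the adjacency condition, and then the naturality computation. The only cosmetic difference is that you delegate the independence step to the earlier in-text derivation of Equation~\eqref{eq:algbox:naive} from~\eqref{def:adjacency:pos}, whereas the paper repeats that canonical-form argument inline (explicitly noting it is ``similar to the intuition we gave earlier'').
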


\begin{proof}
  We only show the case of positive boxes.  Suppose $\alg{\beta} \in \algpbox{X}{I}{J}{y}$ is an algebraic box.  Let
  $\beta = \prealize{X}{I}{J}{y}\parens{\alg{\beta}}$.  Note that any morphism $f \in \parens{\fspbox{I}{J}{y}}_{K}$, by
  definition, admits a factorization $f = \ocomp{f_1}{f_2}$ for some $f_1 \in \pafm{I}{y}$, and thus the side condition
  on the definition of $\prealize{X}{I}{J}{y}$ can always be satisfied.  The worry is that the same morphism $f$ may
  admit multiple different such factorizations, and it is necessary to show the resulting cube is nevertheless properly
  defined.  So, suppose that $f = \ocomp{f_1}{f_2} = \ocomp{f_1'}{f_2'}$, for some $f_1$ and $f_1'\in\pafm{I}{y}$, in
  order to show that
  \begin{displaymath}
    \label{eq:well-defined}
    \cs{X}_{f_2}\parens{\beta_{\cod{f_1}}\parens{f_1}} = \cs{X}_{f_2'}\parens{\beta_{\cod{f_1'}}\parens{f_1'}}.
  \end{displaymath}
  The reasoning is similar to the intuition we gave earlier.  The morphism $f$ admits a canonical form that consist of
  face maps, renamings, and degeneracies, in that order.  If $f_1$ and $f_1'$ are the same face map, then the canonical
  form implies that $f_2$ and $f_2'$ must be the same, because face maps are epimorphisms, from which
  Equation~\eqref{eq:well-defined} follows immediately.  Otherwise, assume $f_1$ and $f_1'$ are different face maps.
  The canonical forms of $f_2$ and $f_2'$ must differ by exactly one face map, where $f_1$ is missing in $f_1$ and
  $f_1'$ is missing in $f_2'$, which is to say that $f_2$ and $f_2'$ share a ``common morphism'' $f_3$ such that
  \begin{displaymath}
    f_2 = \ocomp{f_1'}{f_3}\ \text{and}\ f_2' = \ocomp{f_1}{f_3}.
  \end{displaymath}
  Therefore
  \begin{align*}
    \cs{X}_{f_2}\parens{\alg{\beta}_{f_1}}
    &=
    \cs{X}_{\ocomp{f_1'}{f_3}}\parens{\alg{\beta}_{f_1}}
    &&\text{(construction of $f_3$)}
    \\&=
    \cs{X}_{f_3}\parens{\cs{X}_{f_1'}\parens{\alg{\beta}_{f_1}}}
    &&\text{(functoriality of $\cs{X}$)}
    \\&=
    \cs{X}_{f_3}\parens{\cs{X}_{f_1}\parens{\alg{\beta}_{f_1'}}}
    &&\text{(adjacency condition of algebraic boxes)}
    \\&=
    \cs{X}_{\ocomp{f_1}{f_3}}\parens{\alg{\beta}_{f_1}}
    &&\text{(functoriality of $\cs{X}$)}
    \\&=
    \cs{X}_{f_2'}\parens{\alg{\beta}_{f_1'}}
    &&\text{(construction of $f_3$)}
  \end{align*}
  which proves that $\beta_{K}\parens{f}$ is well-defined.  Naturality of $\beta$ is proved by considering an
  arbitrary morphism $g : K \to K'$ as follows:
  \begin{align*}
    \cs{X}_{g}\parens{\beta_{K}\parens{f}}
    &=
    \cs{X}_{g}\parens{\beta_{K}\parens{\ocomp{f_1}{f_2}}}
    &&\text{(decomposition of $f$)}
    \\&=
    \cs{X}_{g}\parens{\cs{X}_{f_2}\parens{\alg{\beta}_{f_1}}}
    &&\text{(definition of $\prealize{X}{I}{J}{y}$)}
    \\&=
    \cs{X}_{\ocomp{f_2}{g}}\parens{\alg{\beta}_{f_1}}
    &&\text{(functoriality of $\cs{X}$)}
    \\&=
    \beta_{K}\parens{\ocomp{f_1}{\ocomp{f_2}{g}}}
    &&\text{(definition of $\prealize{X}{I}{J}{y}$)}
    \\&=
    \beta_{K}\parens{\ocomp{f}{g}}
    &&\text{(decomposition of $f$)}
    \\&=
    \beta_{K}\parens{\parens{\fspbox{I}{J}{y}}_{g}\parens{f}}.
    &&\text{(functorial action of $\fspbox{I}{J}{y}$)}
  \end{align*}
\end{proof}

\begin{proposition}
  \label{prop:box:biject}
  $\pnerve{X}{I}{J}{y}$ and $\prealize{X}{I}{J}{y}$ are inverse to each other;
  so are $\nnerve{X}{I}{J}{y}$ and $\nrealize{X}{I}{J}{y}$.
\end{proposition}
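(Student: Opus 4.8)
The plan is to verify directly that the two composites $\ocomp{\prealize{X}{I}{J}{y}}{\pnerve{X}{I}{J}{y}}$ and $\ocomp{\pnerve{X}{I}{J}{y}}{\prealize{X}{I}{J}{y}}$ are identity functions, treating the positive case only (the negative case is identical with $\pafm{I}{y}$ replaced by $\nafm{I}{y}$).  Two facts established earlier do all the work: first, when applying $\prealize{X}{I}{J}{y}$ to a morphism $f$ of $\fspbox{I}{J}{y}$ one may pick \emph{any} factorization $f = \ocomp{f_1}{f_2}$ with $f_1 \in \pafm{I}{y}$, since Proposition~\ref{prop:realize-welldef} guarantees the result is independent of this choice; second, a geometric box $\beta$ is a natural transformation, so its components commute with the functorial action of $\cs{X}$ and of the free-standing box.

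First I would show $\ocomp{\pnerve{X}{I}{J}{y}}{\prealize{X}{I}{J}{y}} = \id$ on $\algpbox{X}{I}{J}{y}$.  Take an algebraic box $\alg{\beta}$ and set $\beta = \prealize{X}{I}{J}{y}\parens{\alg{\beta}}$.  For each $f \in \pafm{I}{y}$, the definition of the nerve gives $\pnerve{X}{I}{J}{y}\parens{\beta}_f = \beta_{\cod{f}}\parens{f}$, and I would evaluate $\beta_{\cod{f}}$ at $f$ using the trivial factorization $f = \ocomp{f}{\id}$ (so $f_1 = f \in \pafm{I}{y}$ and $f_2 = \id$); by Proposition~\ref{prop:realize-welldef} this choice is legitimate, yielding $\pnerve{X}{I}{J}{y}\parens{\beta}_f = \cs{X}_{\id}\parens{\alg{\beta}_f} = \alg{\beta}_f$, so $\alg{\beta}$ is recovered.

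Next I would show $\ocomp{\prealize{X}{I}{J}{y}}{\pnerve{X}{I}{J}{y}} = \id$ on $\hom{\CSET}{\fspbox{I}{J}{y}}{\cs{X}}$.  Take a geometric box $\beta$ and set $\alg{\beta} = \pnerve{X}{I}{J}{y}\parens{\beta}$, so $\alg{\beta}_f = \beta_{\cod{f}}\parens{f}$ for $f \in \pafm{I}{y}$.  For arbitrary $K$ and arbitrary $f \in \parens{\fspbox{I}{J}{y}}_K$, fix a factorization $f = \ocomp{f_1}{f_2}$ with $f_1 \in \pafm{I}{y}$; then
\begin{align*}
  \parens{\prealize{X}{I}{J}{y}\parens{\alg{\beta}}}_K\parens{f}
  &= \cs{X}_{f_2}\parens{\alg{\beta}_{f_1}}
  = \cs{X}_{f_2}\parens{\beta_{\cod{f_1}}\parens{f_1}}
  \\ &= \beta_K\parens{\parens{\fspbox{I}{J}{y}}_{f_2}\parens{f_1}}
  = \beta_K\parens{\ocomp{f_1}{f_2}}
  = \beta_K\parens{f},
\end{align*}
where the step into the second line is the naturality of $\beta$ at the morphism $f_2$ and the next uses the functorial action of the co-sieve.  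Hence $\prealize{X}{I}{J}{y}\parens{\pnerve{X}{I}{J}{y}\parens{\beta}} = \beta$, and since both composites are identities the two maps are mutually inverse.

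I do not expect a genuine obstacle here: the argument is a direct unfolding of the defining equations of the nerve and the realization.  The one point meriting attention is that $\prealize{X}{I}{J}{y}$ is only specified relative to a choice of factorization, but Proposition~\ref{prop:realize-welldef} already disposes of that concern, and in the first composite we are free to use the factorization through $\id$.  The more delicate companion fact --- that these bijections are moreover natural in $J$ --- is best isolated in a separate proposition rather than folded into this one.
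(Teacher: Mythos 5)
Your proposal is correct and follows essentially the same route as the paper's own proof: both composites are checked by unfolding the definitions, using the trivial factorization $f=\ocomp{f}{\id}$ for the nerve-of-realization direction and naturality of $\beta$ together with the co-sieve's functorial action for the realization-of-nerve direction. Your explicit appeal to Proposition~\ref{prop:realize-welldef} to justify the freedom in choosing the factorization is a point the paper leaves implicit, but the argument is the same.
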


\begin{proof}
  As usual, it suffices to consider only positive boxes; the argument for negative boxes is analogous.  Let $\alg{\beta}
  \in \algpbox{X}{I}{J}{y}$ be a positive algebraic box, let $\alg{\beta}' =
  \pnerve{X}{I}{J}{y}\parens{\prealize{X}{I}{J}{y}\parens{\alg{\beta}}}$ be the nerve of its realization, and let the
  intermediate geometic box $\beta = \prealize{X}{I}{J}{y}\parens{\alg{\beta}}$.  To show that $\alg{\beta}
  = \alg{\beta}'$, consider any $f \in \pafm{I}{y}$, and calculate as follows:
  \begin{align*}
    \alg{\beta}'_f
    &=
    \beta_K\parens{f}
    &&\text{(definition of $\pnerve{X}{I}{J}{y}$)}
    \\&=
    \cs{X}_{\id}\parens{\alg{\beta}_f}
    &&\text{(definition of $\prealize{X}{I}{J}{y}$)}
    \\&=
    \alg{\beta}_f.
    &&\text{(functoriality of $\cs{X}$)}
  \end{align*}

  Conversely, let $\beta : \fspbox{I}{J}{y} \to \cs{X}$ be a positive geometric box, let $\beta' =
  \prealize{X}{I}{J}{y}\parens{\pnerve{X}{I}{J}{y}\parens{\beta}}$ be the realization of its nerve, and let $\alg{\beta}
  = \pnerve{X}{I}{J}{y}\parens{\beta}$ be the intermediate algebraic box in the equation.  For any $f =
  \ocomp{f_1}{f_2}$ such that $f_1 \in \pafm{I}{y}$,
  \begin{align*}
    \beta'_{K}\parens{f}
    &=
    \cs{X}_{f_2}\parens{\alg{\beta}_{f_1}}
    &&\text{(definition of $\prealize{X}{I}{J}{y}$)}
    \\&=
    \cs{X}_{f_2}\parens{\beta_{K'}\parens{f_1}}
    &&\text{(definition of $\pnerve{X}{I}{J}{y}$)}
    \\&=
    \beta_{K}\parens{\parens{\fspbox{I}{J}{y}}_{f_2}\parens{f_1}}
    &&\text{(naturality of $\beta$)}
    \\&=
    \beta_{K}\parens{\ocomp{f_1}{f_2}}
    &&\text{(functorial action of $\fspbox{I}{J}{y}$)}
    \\&=
    \beta_{K}\parens{f}.
    &&\text{(decomposition of $f$)}
  \end{align*}

  Therefore these two functions form a bijection.
\end{proof}

\begin{proposition}
  \label{prop:box:biject:nat}
  The bijections in Proposition~\ref{prop:box:biject} are natural in $J$.
\end{proposition}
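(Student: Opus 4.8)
The plan is to identify the bijection $\hhpb{I}{J}{y}$ with the nerve map $\pnerve{X}{I}{J}{y}$ (whose inverse is $\prealize{X}{I}{J}{y}$ by Proposition~\ref{prop:box:biject}) and to verify, by a direct diagram chase, that for every morphism $h:J\to J'$ of $\CC$ the square whose horizontal edges are $\hom{\CSET}{\fspbox{I}{-}{y}}{\cs{X}}_h$ and $\parens{\algpbox{X}{I}{-}{y}}_h$ and whose vertical edges are $\pnerve{X}{I}{J}{y}$ and $\pnerve{X}{I}{J'}{y}$ commutes --- this is precisely the top face of the cube in Figure~\ref{fig:gkc-akc}. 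Since each $\pnerve{X}{I}{J}{y}$ is a bijection, naturality of the nerve family automatically transports to the realization family, so it suffices to argue for the nerve, and, as elsewhere, only for positive boxes; the negative case is obtained by replacing $\pafm{I}{y}$ with $\nafm{I}{y}$ throughout. Concretely, I would fix $h:J\to J'$ and a positive geometric box $\beta:\fspbox{I}{J}{y}\to\cs{X}$, and show that the two elements of $\algpbox{X}{I}{J'}{y}$ obtained by chasing $\beta$ around the square agree at each face map $f\in\pafm{I}{y}$.

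First I would make the functorial action $\parens{\fspbox{I}{-}{y}}_h:\fspbox{I}{J'}{y}\to\fspbox{I}{J}{y}$ explicit: at each dimension $K$ it sends a morphism $g:\bdcomb{I}{J'}{y}\to K$ of the co-sieve to $\ocomp{\parens{\bdcomb{I}{h}{y}}}{g}$, precomposition with the augmentation $\bdcomb{I}{h}{y}:\bdcomb{I}{J}{y}\to\bdcomb{I}{J'}{y}$. Next, and this is the one genuinely load-bearing observation, I would record the compatibility of face maps with augmentations: if $f\in\pafm{I}{y}$ has $J$- and $J'$-instances $f_{J}:\bdcomb{I}{J}{y}\to\comb{K}{J}$ and $f_{J'}:\bdcomb{I}{J'}{y}\to\comb{K}{J'}$ (sharing the same codomain data $K$), then $\ocomp{\parens{\bdcomb{I}{h}{y}}}{f_{J'}}=\ocomp{f_{J}}{\parens{\comb{K}{h}}}$ as morphisms $\bdcomb{I}{J}{y}\to\comb{K}{J'}$. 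This is immediate from the definitions of augmentation and of the specialization morphisms as functions on finite sets --- it is just the ``polymorphism in ambient dimensions'' used throughout --- but it is exactly what links the $J$-instance of $f$, which appears on the algebraic side, to the $J'$-instance, which appears after reindexing the box by $h$. (As a by-product it shows the morphism $\ocomp{\parens{\bdcomb{I}{h}{y}}}{f_{J'}}$ really lies in the co-sieve $\fspbox{I}{J}{y}$, since it equals $\ocomp{f_{J}}{\parens{\comb{K}{h}}}$.)

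With these two facts the chase is short. Unwinding the defining equation of the functorial action of $\algpbox{X}{I}{-}{y}$ and of the nerve gives $\parens{\algpbox{X}{I}{-}{y}}_h\parens{\pnerve{X}{I}{J}{y}\parens{\beta}}_f=\cs{X}_{\comb{K}{h}}\parens{\beta_{\comb{K}{J}}\parens{f_{J}}}$. Going the other way, unwinding the nerve over $J'$ and the explicit form of $\parens{\fspbox{I}{-}{y}}_h$ gives $\pnerve{X}{I}{J'}{y}\parens{\hom{\CSET}{\fspbox{I}{-}{y}}{\cs{X}}_h\parens{\beta}}_f=\beta_{\comb{K}{J'}}\parens{\ocomp{\parens{\bdcomb{I}{h}{y}}}{f_{J'}}}$; rewriting the argument by the compatibility observation turns this into $\beta_{\comb{K}{J'}}\parens{\ocomp{f_{J}}{\parens{\comb{K}{h}}}}$, and then naturality of $\beta$ applied to the morphism $\comb{K}{h}:\comb{K}{J}\to\comb{K}{J'}$ (whose action on $\fspbox{I}{J}{y}$ is post-composition by $\comb{K}{h}$, by Equation~\eqref{def:fspbox2}) rewrites it as $\cs{X}_{\comb{K}{h}}\parens{\beta_{\comb{K}{J}}\parens{f_{J}}}$, which matches the first computation. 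Hence the square commutes, and the negative case is word-for-word the same with $\nafm{I}{y}$ in place of $\pafm{I}{y}$.

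I expect the main obstacle to be bookkeeping rather than substance: a single letter $f$ stands for a ``polymorphic'' face map and for its instances $f_{J}$ and $f_{J'}$ at two different ambient dimensions, and one must be scrupulous about which instance occurs on each side of the square. The compatibility observation is precisely the bridge that makes the two instances line up; beyond it, nothing is needed except naturality of $\beta$ and the functoriality of $\fspbox{I}{-}{y}$, $\fscube{\bdcomb{I}{-}{y}}$ and $\algpbox{X}{I}{-}{y}$ already set up in the text.
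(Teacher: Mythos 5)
Your proposal is correct and follows essentially the same route as the paper: reduce to naturality of the nerve at each face map, use the interchange $\ocomp{\parens{\bdcomb{I}{h}{y}}}{f_{J'}}=\ocomp{f_{J}}{\parens{\comb{K}{h}}}$ (the paper's step ``$f$ is polymorphic in dimensions other than $\ext{I}{y}$''), and finish with naturality of $\beta$. Your explicit bookkeeping of the $J$- versus $J'$-instances of $f$ just makes precise what the paper leaves implicit.
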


\begin{proof}
  Consider, as usual, the case of positive boxes; the negative boxes are handled similarly.  Let $\beta :
  \fspbox{X}{I}{J}{y} \to \cs{X}$ be a positive geometric box.  It suffices to show that, for any morphism $h : J \to
  J'$ in $\CC$,
  \begin{displaymath}
    \parens{\algpbox{X}{I}{-}{y}}_h
    \parens{\pnerve{X}{I}{J}{y}\parens{\beta}}
    =
    \pnerve{X}{I}{J'}{y}
    \parens{\ocomp{\parens{\fspbox{I}{-}{y}}_h}{\beta}}.
  \end{displaymath}
  Focusing on the left hand side,
  for any face map $\parens{f : \bdcomb{I}{J''}{y} \to \comb{K}{J''}} \in \pafm{I}{y}$
  that is polymorphic in $J''$, we know
  \begin{align*}
    \parens{\algpbox{X}{I}{-}{y}}_h
    \parens{\pnerve{X}{I}{J}{y}\parens{\beta}}_f
    &=
    \parens{\cs{X}_{\parens{\comb{K}{-}}}}_h
    \parens{
      \pnerve{X}{I}{J}{y}\parens{\beta}_f
    }
    &&\text{(functorial action of $\algpbox{X}{I}{-}{y}$)}
    \\&=
    \parens{\cs{X}_{\parens{\comb{K}{-}}}}_h
    \parens{\beta_{\comb{K}{J}}\parens{f}}.
    &&\text{(definition of $\pnerve{X}{I}{-}{y}$)}
  \end{align*}
  As for the right hand side,
  \begin{align*}
    \pnerve{X}{I}{J'}{y}
    \parens{\ocomp{\parens{\fspbox{I}{-}{y}}_h}{\beta}}_f
    &=
    \parens{\ocomp{\parens{\fspbox{I}{-}{y}}_h}{\beta}}_{\comb{K}{J'}}
    \parens{f}
    &&\text{(definition of $\pnerve{X}{I}{-}{y}$)}
    \\&=
    \beta_{\comb{K}{J'}}
    \parens{\parens{\parens{\fspbox{I}{-}{y}}_h}_{\comb{K}{J'}}
    \parens{f}}
    &&\text{(composition of natural transformations)}
    \\&=
    \beta_{\comb{K}{J'}}
    \parens{\ocomp{\parens{\bdcomb{I}{-}{y}}_{h}}{f}}
    &&\text{(functorial action of $\fspbox{I}{-}{y}$)}
    \\&=
    \beta_{\comb{K}{J'}}
    \parens{\ocomp{f}{\parens{\comb{K}{-}}_{h}}}
    &&\text{($f$ is polymorphic in dimensions other than $\ext{I}{y}$)}
    \\&=
    \parens{\cs{X}_{\parens{\comb{K}{-}}}}_h
    \parens{\beta_{\comb{K}{J}}\parens{f}}.
    &&\text{(naturality of $\beta$)}
  \end{align*}
\end{proof}

\begin{proposition}
  \label{prop:boxproj}
  The algebraic box projection of an algebraic cube
  corresponds to the geometric box projection of the corresponding geometric cube.
\end{proposition}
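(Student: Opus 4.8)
The statement means that, once geometric cubes are identified with algebraic cubes by the Yoneda isomorphism $\coyoneda{\bdcomb{I}{J}{y}}$ and geometric boxes with algebraic boxes by the bijection $\hhpb{I}{J}{y}=\pnerve{X}{I}{J}{y}$ of Proposition~\ref{prop:box:biject}, the geometric box projection $\ocomp{\iota_0}{-}$ becomes the algebraic box projection $\paproj{I}{J}{y}$; that is, the square
\begin{displaymath}
  \pnerve{X}{I}{J}{y}\circ\parens{\ocomp{\iota_0}{-}} = \paproj{I}{J}{y}\circ\coyoneda{\bdcomb{I}{J}{y}}
\end{displaymath}
commutes, as recorded in the table of Figure~\ref{fig:gkc-akc}. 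The plan is to unwind both composites on a fixed geometric cube $\kappa:\fscube{\bdcomb{I}{J}{y}}\to\cs{X}$ with algebraic counterpart $\alg{\kappa}=\kappa_{\bdcomb{I}{J}{y}}\parens{\id}$, so that the identity to prove becomes $\pnerve{X}{I}{J}{y}\parens{\ocomp{\iota_0}{\kappa}}=\paproj{I}{J}{y}\parens{\alg{\kappa}}$ in $\algpbox{X}{I}{J}{y}$. As usual I would treat only the positive case, the negative case being identical with $\nafm{I}{y}$ in place of $\pafm{I}{y}$.

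Both sides are families of cubes indexed by the face maps $f\in\pafm{I}{y}$, so it suffices to compare components. Fixing such an $f:\bdcomb{I}{J}{y}\to\cod{f}$, the $f$-component of the left-hand side is, by the definition of $\pnerve{X}{I}{J}{y}$, the cube $\parens{\ocomp{\iota_0}{\kappa}}_{\cod{f}}\parens{f}=\kappa_{\cod{f}}\parens{\iota_0\parens{f}}$; since $\iota_0$ is the inclusion of a co-sieve into the free-standing cube $\fscube{\bdcomb{I}{J}{y}}$, its components are set inclusions, so this is simply $\kappa_{\cod{f}}\parens{f}$. Now $f=\ocomp{\id}{f}=\fscube{\bdcomb{I}{J}{y}}_f\parens{\id}$ by Equation~\eqref{def:fscube}, so naturality of the transformation $\kappa$ gives $\kappa_{\cod{f}}\parens{f}=\cs{X}_f\parens{\kappa_{\bdcomb{I}{J}{y}}\parens{\id}}=\cs{X}_f\parens{\alg{\kappa}}$. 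On the right-hand side the $f$-component of $\paproj{I}{J}{y}\parens{\alg{\kappa}}$ is by definition exactly $\cs{X}_f\parens{\alg{\kappa}}$, so the two families agree.

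I do not anticipate a genuine obstacle: after unwinding the definitions the argument is a single application of Yoneda naturality, the only point needing care being the bookkeeping around the inclusion $\iota_0$, which contributes nothing because its components are set inclusions. Combining the square just verified with the naturality of the box bijections (Proposition~\ref{prop:box:biject:nat}) and the naturality of the Yoneda isomorphism in $\bdcomb{I}{-}{y}$ then yields commutativity of the whole upper portion of Figure~\ref{fig:gkc-akc}, so that the geometric and algebraic box projections are identified as natural transformations in $J$ rather than merely componentwise.
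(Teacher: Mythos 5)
Your proposal is correct and follows essentially the same route as the paper's proof: both reduce the statement to the componentwise identity $\pnerve{X}{I}{J}{y}\parens{\ocomp{\iota_0}{\kappa}}_f = \cs{X}_f\parens{\alg{\kappa}}$ for each $f\in\pafm{I}{y}$, using that $\iota_0$ acts by set inclusion and then the Yoneda computation $\kappa_{\cod{f}}\parens{f}=\cs{X}_f\parens{\kappa_{\bdcomb{I}{J}{y}}\parens{\id}}$. Your version merely makes the Yoneda step (naturality of $\kappa$ applied to $\id$) explicit where the paper cites it as ``Yoneda.''
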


\begin{proof}
  As usual, consider the positive boxes, as the negatives are handled analogously.  Suppose $\alg{\kappa}$ is an
  algebraic cube in $\cs{X}_{\bdcomb{I}{J}{y}}$ and $\alg{\beta}$ be its algebraic box projection to
  $\algpbox{X}{I}{J}{y}$.  Let the corresponding geometric cube be $\kappa$ and its geometric box projection be $\beta$.
  It suffices to show that $\pnerve{X}{I}{J}{y}\parens{\beta} = \alg{\beta}$.  For any morphism $\parens{f :
    \bdcomb{I}{J}{y} \to K} \in \pafm{I}{y}$,
  \begin{align*}
    \pnerve{X}{I}{J}{y}\parens{\beta}_f
    &=
    \beta_K\parens{f}
    &&\text{(definition of $\pnerve{X}{I}{J}{y}$)}
    \\&=
    \parens{\ocomp{\iota_0}{\kappa}}_{K}\parens{f}
    &&\text{(definition of geometric box projection)}
    \\&=
    \kappa_K\parens{\parens{\iota_0}_{K}\parens{f}}
    &&\text{(composition of natural transformations)}
    \\&=
    \kappa_K\parens{f}
    &&\text{(inclusion)}
    \\&=
    \cs{X}_f\parens{\alg{\kappa}}
    &&\text{(Yoneda)}
    \\&=
    \alg{\beta}_f.
    &&\text{(definition of algebraic box projection)}
  \end{align*}
\end{proof}

\begin{proposition}
  \label{thm:gkc-akc}
  There is a bijection between the set of geometric uniform filling operations
  and that of algebraic uniform filling operations.
\end{proposition}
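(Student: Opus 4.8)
The plan is to reduce the statement to facts already in hand: I would transport sections of the geometric box projection to sections of the algebraic one along the bijections of Proposition~\ref{prop:box:biject} and the Yoneda Lemma, and track naturality in $J$. As usual I would treat only the positive case, the negative one being obtained by replacing $\pafm{I}{y}$ with $\nafm{I}{y}$ throughout. Fix $I$ and $y$. For each $J$ there are two bijections: on the domain side $\hhpb{I}{J}{y}$, between positive geometric boxes $\hom{\CSET}{\fspbox{I}{J}{y}}{\cs{X}}$ and positive algebraic boxes $\algpbox{X}{I}{J}{y}$ (Proposition~\ref{prop:box:biject}), and on the codomain side the Yoneda bijection $\coyoneda{\bdcomb{I}{J}{y}}$, between geometric cubes $\hom{\CSET}{\fscube{\bdcomb{I}{J}{y}}}{\cs{X}}$ and algebraic cubes $\cs{X}_{\bdcomb{I}{J}{y}}$. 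By Proposition~\ref{prop:boxproj} these intertwine the geometric box projection with the algebraic one, that is, $\hhpb{I}{J}{y}\circ\parens{\ocomp{\iota_0}{-}}=\paproj{I}{J}{y}\circ\coyoneda{\bdcomb{I}{J}{y}}$ (composition read right to left, writing $\ocomp{\iota_0}{-}$ for precomposition with $\iota_0$). To a geometric uniform filling operation---a family of sections of $\ocomp{\iota_0}{-}$, one per $J$, natural in $J$---I would assign the algebraic filling operation given for each $J$ by conjugation, $\pfill{X}{I}{J}{y}\eqdef\coyoneda{\bdcomb{I}{J}{y}}\circ\plift{X}{I}{J}{y}\circ\inv{\parens{\hhpb{I}{J}{y}}}$, and to an algebraic uniform filling operation the geometric one $\plift{X}{I}{J}{y}\eqdef\inv{\parens{\coyoneda{\bdcomb{I}{J}{y}}}}\circ\pfill{X}{I}{J}{y}\circ\hhpb{I}{J}{y}$.

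The argument then proceeds in three steps. First, I would check that the conjugate of a section is again a section: precomposing $\pfill{X}{I}{J}{y}$ with $\paproj{I}{J}{y}$ and using the intertwining identity to replace $\paproj{I}{J}{y}\circ\coyoneda{\bdcomb{I}{J}{y}}$ by $\hhpb{I}{J}{y}\circ\parens{\ocomp{\iota_0}{-}}$, the section equation $\parens{\ocomp{\iota_0}{-}}\circ\plift{X}{I}{J}{y}=\id$ collapses the interior, and $\hhpb{I}{J}{y}$ then cancels against $\inv{\parens{\hhpb{I}{J}{y}}}$, leaving $\id$; the reverse implication is the mirror image. Second, I would check naturality in $J$: $\pfill{X}{I}{-}{y}$ is a ``vertical'' composite of the three $J$-indexed families $\coyoneda{\bdcomb{I}{-}{y}}$, $\plift{X}{I}{-}{y}$, $\inv{\parens{\hhpb{I}{-}{y}}}$, so it is natural in $J$ as soon as each factor is; naturality of $\plift{X}{I}{-}{y}$ is its defining uniformity condition (Figure~\ref{fig:uni-gkc}), naturality of $\hhpb{I}{-}{y}$, hence of its inverse, is Proposition~\ref{prop:box:biject:nat}, and naturality of $\coyoneda{\bdcomb{I}{-}{y}}$ is the naturality of the Yoneda bijection in its representing object, specialized along the augmentations $\bdcomb{I}{h}{y}$. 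Third, I would check that the two assignments are mutually inverse, which is immediate since $\inv{g}\circ\parens{g\circ s\circ\inv{f}}\circ f=s$ and its mirror image, so conjugating by $\parens{\coyoneda{\bdcomb{I}{-}{y}},\hhpb{I}{-}{y}}$ and then back returns the original operation.

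The conceptual content is therefore entirely supplied by Propositions~\ref{prop:box:biject}, \ref{prop:box:biject:nat}, and~\ref{prop:boxproj}, and what remains is bookkeeping. The one point I expect to require care is confirming that Proposition~\ref{prop:boxproj}, which as stated is an equality of cubes for each fixed $J$, genuinely assembles into a commuting square of morphisms of cubical sets in $J$, so that conjugating by it transports naturality rather than merely permitting it; this comes down to verifying that all four edges are natural in $J$ for the functorialities fixed earlier---for the box projections this is visible from their definitions, which ``map'' the action of $h$ over the faces, and for the two bijections it is exactly Proposition~\ref{prop:box:biject:nat} together with the extension of Yoneda---after which no further computation is needed.
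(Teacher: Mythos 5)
Your proposal is correct and follows essentially the same route as the paper: both transport sections of the box projection across the nerve/realization bijection of Proposition~\ref{prop:box:biject} and the Yoneda correspondence, invoking Proposition~\ref{prop:boxproj} to intertwine the two projections and Proposition~\ref{prop:box:biject:nat} for naturality in $J$. You simply spell out the section, naturality, and mutual-inverse verifications that the paper's proof leaves as a one-line citation of those propositions.
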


\begin{proof}
  We only show the case of positive boxes.
  Suppose we have a positive geometric filling operation $\plift{X}{I}{J}{y}$.
  $\ocomp{\ocomp{\prealize{X}{I}{J}{y}}{\plift{X}{I}{J}{y}}}{\pnerve{X}{I}{J}{y}}$
  is a valid positive algebraic uniform filling operation
  by the Yoneda Lemma and Propositions~\ref{prop:box:biject},~\ref{prop:box:biject:nat}~and~\ref{prop:boxproj}.
  Similarly, suppose we have a positive algebraic filling operation $\pfill{X}{I}{J}{y}$.
  $\ocomp{\ocomp{\pnerve{X}{I}{J}{y}}{\pfill{X}{I}{J}{y}}}{\prealize{X}{I}{J}{y}}$
  is a valid geometric one.

  By the Yoneda Lemma and Proposition~\ref{prop:box:biject}, the above two constructions
  are inverse to each other, which shows that there is a bijection between
  the sets of algebraic and geometric uniform filling operations.
\end{proof}

\citeauthor{williamson2012combinatorial} shows in detail that a Kan complex forms an $\omega$-groupoid in that reversion
and concatenation of cubes may be defined in such a way that the groupoid laws hold up to higher identifications given
by cubes.  Rather than repeat the construction here, the reader is referred to~\cite{williamson2012combinatorial} for a
proof in the standard cubical setting that may be adapted to the present setting using named, rather than numbered,
dimensions.

\section{Kan Fibrations}
\label{sec:kf}

The uniform Kan condition ensures that a cubical set has sufficient structure to be the interpretation of a closed type
in which higher-dimensional cubes are interpreted as identifications.  \citeauthor{williamson2012combinatorial} shows,
by explicit constructions, that a Kan complex forms an $\omega$-groupoid.  More generally, it is necessary to consider
the conditions under which a cubical family may serve as the interpretation of a family of types indexed by a type.
Thinking informally of a family of types as a mapping sending each element of the index type to a type, one quickly
arrives at the requirement that this assignment should respect the identifications expressed by the cubical structure of
the indexing type---identified indices should determine ``equivalent'' types.  This means that if $\alpha_0$ and
$\alpha_1$ are identified by $\alpha$ in the index type, then there should be \emph{transport functions} between the
type assigned to $\alpha_0$ and the type assigned to $\alpha_1$ that are mutually inverse in the sense that the starting
point should be identified with the ending point for both composites.  The purpose of this section is to make these
intuitions precise.

As motivation, let us consider the concept of a family of sets $\famset{Y_x}{x\in X}$, where $X$ is a set of indices
and, for each $x\in X$, $Y_x$ is a set.  What, precisely, is such a thing?  One interpretation is that the family is a
mapping $Y:X\to\SET$ such that for every $x\in X$, $Y\parens{x}=Y_x$.  For this to make sense, $X$ must be construed as
a category, the obvious choice being the discrete category on objects $X$; the functoriality of $Y$ is then trivial.
One may worry that the codomain of $Y$ is a ``large'' category, but such worries may be allayed by recalling that the
axioms of replacement and union ensure that the direct image of $X$ by $Y$ must form a set---we are only using a ``small
part'' of the entire category $\SET$.  But this observation leads immediately to an alternative, and in some ways
technically superior, formulation of a family of sets, called \emph{fibrations}.  According to this view the family of
sets is identified with a \emph{fibration} $p:Y\to X$, where $Y$, the \emph{total space of $p$}, is the amalgamation of
all of the $Y_x$'s, and $p$, the \emph{display map}, identifies, for each $y\in Y$ the unique $x\in X$ such that
$p\parens{y}=x$.  The element $y\in Y$ is said to \emph{lie over} the index $p(y)\in X$.  The two views of families are
equivalent in that each can be recovered from the other.  Given $p:Y\to X$ we may define a family of sets
$\famset{\fib{p}_x}{x\in X}$ as $\fib{p}(x)\eqdef p^{-1}(x)$, the preimage of $x$ under $p$; this always exists because
$\SET$ has equalizers, and so every map in $\SET$ is a fibration.  Conversely, a family of sets $Y=\famset{Y_x}{x\in X}$
determines a fibration $p_Y:(\coprod_{x\in X} Y_x) \to X$ given by the first projection; the fibers of $p_Y$ are
isomorphic to the given $Y_i$'s.

Cubical sets may themselves be thought of as families of sets, indexed by the cube category, $\CC$, rather than another
set.  This sets up two ways for formulate cubical sets, analogous to those just considered for plain sets:
\begin{enumerate}
\item As a \emph{covariant presheaf} (i.e., a \emph{co-presheaf}), a functor $\cs{X}:\CC\to\SET$, that sends dimension $I$ to sets of $I$-cubes, and
  sends cubical morphisms $f:I\to I'$ to functions $\cs{X}_f:\cs{X}_I\to\cs{X}_{I'}$, preserving identities and
  composition.  (This is the definition given in Section~\ref{sec:cs}.)
\item As a \emph{discrete Grothendieck opfibration}, a functor $\cfib{p}_{\cs{X}}:\cs{X}\to\CC$, that sends each object
  $\alpha$ of $\cs{X}$ to its dimension $\cfib{p}_{\cs{X}}(\alpha)$, an object of $\CC$, and each morphism
  $\phi:\alpha\to\alpha':\cs{X}$ to a cubical morphism $\cfib{p}_{\cs{X}}(\phi):
  \cfib{p}_{\cs{X}}(\alpha)\to\cfib{p}_{\cs{X}}(\alpha')$.  Moreover, the functor $\cfib{p}_{\cs{X}}$ determines a cubical
  set in the sense of being a covariant presheaf, $\fib{\cfib{p}_{\cs{X}}}:\CC\to\SET$, as follows:
  \begin{enumerate}
  \item For each object $I:\CC$, the set of objects, the \emph{fiber of $\cfib{p}_{\cs{X}}$ over $I$}, is defined to be
    the set $\fib{\cfib{p}_{\cs{X}}}_I\eqdef\thesetst{\cs{x}\in\cs{X}}{\cfib{p}_{\cs{X}}(\cs{x})=I}$.
  \item For each cubical morphism $f:I\to I':\CC$, there must be
    a function $\fib{\cfib{p}_{\cs{X}}}_f:\fib{\cfib{p}_{\cs{X}}}_I\to\fib{\cfib{p}_{\cs{X}}}_{I'}$ between the
    fibers of $\cfib{p}_{\cs{X}}$.  The choice of lifting must be functorial in $f$.
  \end{enumerate}
  The critical point is that every morphism $f:I\to I'$ have a unique lifting mapping elements of the fiber over $I$ to
  elements of the fiber over $I'$.
\end{enumerate}
These two formulations are equivalent in that each can be constructed from the other.
\begin{enumerate}
\item Given $\cs{X}:\CC\to\SET$ one may form the (discrete) Grothendieck construction to obtain a functor
  $\cfib{p}_{\cs{X}}:\coelt{}{\cs{X}}\to\CC$ from the category of co-elements of $\cs{X}$ to the cube category that is in
  fact a discrete Grothendieck fibration.
\item Given a discrete Grothendieck fibration $\cfib{p}:\cs{X}\to\CC$, the lifting requirement states exactly that
  $\cfib{p}$ determine a cubical set consisting of the fibers of $\cfib{p}$ and functions between them.
\end{enumerate}

There are, as in the preceding examples, two ways to formulate the concept of a family of \emph{cubical sets} $\cs{Y}$ indexed by a
\emph{cubical set} $\cs{X}$.  It is worthwhile to take a moment to consider what this should mean.  Informally, at each
dimension $I$, there is associated to each $I$-cube $x$ in $\cs{X}$ a set of $I$-cubes $\cs{Y}_{x}$.  To be more
precise, a pointwise representation of a cubical family of sets is a functor $\cs{Y}:\coelt{}{\cs{X}}\to\SET$ that
directly specifies the $I$-cubes of $\cs{Y}$ for each $I$-cube $x$ of $\cs{X}$, and specifies how to lift morphisms
$f:(I,x)\to(I',\cs{Y}_f(x))$ to functions $\cs{Y}_I(x)\to\cs{Y}_{I'}(\cs{X}_f(x))$.  It is this formulation of cubical
family of sets that is used in \cite{bch} to represent families of types.
A fibered representation of a cubical family of sets is a cubical set $\extctx{\cs{X}}{\cs{Y}}$ together with a morphism
of cubical sets $\cfib{p}_{\cs{Y}}:\extctx{\cs{X}}{\cs{Y}}\to\cs{X}$ from the \emph{total space} to the \emph{base space}
of the \emph{fibration} $\cfib{p}_{\cs{Y}}$ (or just $\cfib{p}$ for short).  Thus, $\cfib{p}$ is a family of functions
$\cfib{p}_I:\parens{\extctx{\cs{X}}{\cs{Y}}}_I\to\cs{X}_I$ determining, for each $I$-cube $y$ of the total space, the
$I$-cube of the base space over which it lies.  Naturality means that if $f:I\to I'$ is a cubical morphism, 
\begin{align}
  \label{eq:nat-fib}
  \cs{X}_f(\cfib{p}_I(y)) & = \cfib{p}_{I'}(\cs{Y}_f(y)) && (y\in\cs{Y}_I).
\end{align}
A cubical family of elements of $\cfib{p}:\extctx{\cs{X}}{\cs{Y}}\to\cs{X}$ is a \emph{section} of $\cfib{p}$, which is a
morphism $\cs{y}:\cs{X}\to\extctx{\cs{X}}{\cs{Y}}$ of cubical sets such that $\ocomp{\cs{y}}{\cfib{p}}=\id$.  This means
not only that $\cfib{p}_I(\cs{y}_I(x))=x$, but that the naturality condition holds as well:
\begin{displaymath}
  \parens{\extctx{\cs{X}}{\cs{Y}}}_I(\cs{y}_I(x)) = \cs{y}_{I'}(\cs{X}_f(x)).
\end{displaymath}
 
Each fibration determines a pointwise cubical family of sets, $\fib{\cfib{p}}:\coelt{}{\cs{X}}\to\SET$, called the
\emph{fibers of $\cfib{p}$}.  It is defined by the following equations:
\begin{align}
  \label{eq:fib-ptw}
  \fib{\cfib{p}}_I\parens{x} & \eqdef \fib{\cfib{p}_I}(x) = \thesetst{y\in\cs{Y}_I}{\cfib{p}_I(y)=x} \subseteq \cs{Y}_I \\
  \label{eq:fib-mor}
  \fib{\cfib{p}}_f(y) & \eqdef \cs{Y}_f(y) && (f:I\to I')
\end{align}
Naturality of $\cfib{p}$ ensures that $\fib{\cfib{p}}_f : \fib{\cfib{p}}_I\to\fib{\cfib{p}}_{I'}$.  Conversely, each pointwise
cubical of family of sets $\cs{Y}:\coelt{}{\cs{X}}\to\SET$ determines a fibration:
\begin{align}
  \label{eq:ptw-fib}
  \parens{\extctx{\cs{X}}{\cs{Y}}}_I & \eqdef \thesetst{\extsub{x}{y}}{x\in\cs{X}_I\,\ y\in\cs{Y}_I(x)} \\
  \parens{\extctx{\cs{X}}{\cs{Y}}}_f(\extsub{x}{y}) & \eqdef \extsub{\cs{X}_f(x)}{\cs{Y}_f(y)} \\
  \cfib{p}_{\cs{Y}}(\extsub{x}{y}) & \eqdef x
\end{align}
Thus, the two formulations are essentially equivalent.

\smallskip

When considering cubes as identifications, it is natural to demand that identified indices determine equivalent fibers.
At the very least this means that a fibration of cubical sets should determine a \emph{transport function} between the
fibers over identifications in the base space.  If $\cs{\kappa}$ is a $\ext{J}{y}$-cube in $\cs{X}$, then
$\cfib{p}:\cs{Y}\to\cs{X}$ should determine a function%
\footnote{The use of ``$J$'' here suggests that one way to implement $\trans{\cfib{p}}{J}{y}{-}$ through the Kan filler
  is to treat $\cs{\kappa}$ as a box with only one face,
  where all dimensions other than the filling dimension are exactly the extra dimensions $J$.}
\begin{displaymath}
  \trans{\cfib{p}}{J}{y}{\cs{\kappa}} : \fib{\cfib{p}}_J\parens{\cs{\kappa}_0}\to\fib{\cfib{p}}_J\parens{\cs{\kappa}_1}.
\end{displaymath}
where $\cs{\kappa}_0=\cs{X}_{\spec{y}{0}}\parens{\cs{\kappa}}$ and $\cs{\kappa}_1=\cs{X}_{\spec{y}{1}}\parens{\cs{\kappa}}$.  Thus,
viewing $\cs{\kappa}$ as an identification between its two faces $\cs{\kappa}_0$ and $\cs{\kappa}_1$ induces an
equivalence between the cubical sets assigned to its left- and right faces.  Moreover, the transport map should be a
homotopy equivalence between the fibers.\footnote{See~\cite{hott-book} for a full discussion of equivalence of types.}

The required transport map may be derived from a generalization of the uniform Kan condition on cubical sets to the
\emph{standard Kan condition on fibrations}.  It is a generalization in that a cubical set $\cs{X}$ satisfies the UKC
iff the unique map $\cs{!}:\cs{X}\to\cs{1}$ is a standard Kan fibration.
The main idea of the Kan condition for fibrations is simply that
the homotopy lifting property is required to lift geometric boxes in $\cs{Y}$ over a index cube in $\cs{X}$ to a
geometric cube over the same index cube.  This is enough to derive the required transport property between fibers whose
indices are identified by some cube.  Just as before, the standard Kan condition on fibrations extends to the
\emph{uniform Kan condition on fibrations} given by~\citeauthor{bch}.  A richer class of boxes, with omitted extra
dimensions, are permitted, and the fillings are required to be natural in the extra dimensions.  Finally, the
equivalence between the geometric and algebraic Kan conditions for cubical sets is extended to fibrations of cubical
sets.

\smallskip

The standard formulation of the Kan condition for fibrations is geometric.  For a morphism $\cfib{p}:\cs{Y}\to\cs{X}$ of
cubical sets to be a Kan fibration, it is enough to satisfy the following \emph{fiberwise lifting property}.
Suppose that $\beta:\fspbox{I}{\emp}{y}\to\cs{Y}$ is a positive geometric box in $\cs{Y}$ and
$\kappa:\fscube{\ext{I}{y}}\to\cs{X}$ is a geometric cube in $\cs{X}$.
Let $\iota_0:\fspbox{I}{\emp}{y}\hookrightarrow\fscube{\ext{I}{y}}$ be the inclusion of the free-standing box into
the free-standing cube.  We say the box $\beta$ in $\cs{Y}$ \emph{lies over} the geometric cube $\kappa$ in $\cs{X}$ if
this diagram commutes:
\begin{displaymath}
  \begin{tikzcd}
    \fspbox{I}{\emp}{y} \arrow{r}{\beta} \arrow[hook]{d}{\iota_0} & \cs{Y} \arrow{d}{\cfib{p}} \\
    \fscube{\ext{I}{y}} \arrow{r}{\kappa} & \cs{X}
  \end{tikzcd}
\end{displaymath}
The standard positive geometric Kan condition for $\cfib{p}$ states that there is a
geometric cube $\plift{}{I}{\emp}{y}\parens{\kappa;\beta}:\fscube{\ext{I}{y}}\to\cs{Y}$ such that
\begin{enumerate}
  \item The cube $\plift{}{I}{\emp}{y}\parens{\kappa;\beta}$ is a lifting of the box $\beta$ over the index cube $\kappa$ in that it restricts to $\beta$ along the
    inclusion: $\ocomp{\iota_0}{\plift{}{I}{\emp}{y}\parens{\kappa;\beta}}=\beta$.
  \item The cube $\plift{}{I}{\emp}{y}\parens{\kappa;\beta}$ lies over the given cube $\kappa$:
    $\ocomp{\plift{}{I}{\emp}{y}\parens{\kappa;\beta}}{\cfib{p}}=\kappa$.
\end{enumerate}
(Of course, the same requirements are imposed on negative boxes as well.)  See Figure~\ref{fig:kan-fib} for a
diagrammatic formulation of the standard Kan conditions on fibrations.  In the standard case no ``extra'' dimensions are
permitted in a box, so the parameter $J$ on the boxes involved is always $\emp$.

\begin{figure}[ht]
  \centering
  \begin{tikzcd}
    \fspbox{I}{\emp}{y} \arrow{r}{\beta} \arrow[hook]{d}{\iota_0} & \cs{Y} \arrow{d}{\cfib{p}} \\
    \fscube{\ext{I}{y}} \arrow[dashed]{ur} \arrow{r}{\kappa} & \cs{X}
  \end{tikzcd}
  \qquad
  \begin{tikzcd}
    \fsnbox{I}{\emp}{y} \arrow{r}{\beta} \arrow[hook]{d}{\iota_1} & \cs{Y} \arrow{d}{\cfib{p}} \\
    \fscube{\ext{I}{y}} \arrow[dashed]{ur} \arrow{r}{\kappa} & \cs{X}
  \end{tikzcd}
  \caption{Standard Geometric Kan Conditions for Fibrations (Liftings Dashed)}
  \label{fig:kan-fib}
\end{figure}
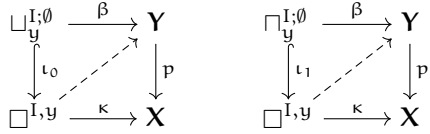

The transport mapping between the fibers of a geometric Kan fibration are readily derived from the geometric Kan
condition.  The identification, $\kappa$, in the base space may be seen as a morphism
$\kappa:\fscube{\fset{y}}\to\cs{X}$ that draws a line in $\cs{X}$.  The transport map may be oriented in either
direction; let us consider here the positive orientation.  There is an inclusion
$\iota_0:\fspbox{\emp}{\emp}{y}\hookrightarrow\fscube{\fset{y}}$ of the positive free-standing box with filling
direction $y$ that specifies the left end-point of the free-standing line as the starting face.  The starting point of
the transport, $y_0\in\fib{\cfib{p}}_{\emp}\parens{\cs{\kappa}_0}$, determines a box
$\beta_0:\fspbox{\emp}{\emp}{y}\to\cs{Y}$ as the geometric realization of the one-point complex $\fset{y_0}$ in
$\cs{Y}_\emp$.  The lifting property determines a geometric line $\lambda=\plift{}{\emp}{\emp}{y}(\beta_0)$ in $\cs{Y}$
such that $\cs{\lambda}_{\spec{y}{0}}=y_0$ and $\cs{\lambda}_{\spec{y}{1}}$ is its destination, determined by $\kappa$,
in the $\emp$-fiber of $\cfib{p}$ over $\cs{X}_{\spec{x}{1}}\parens{\kappa}$.  That is,
\begin{align}
  \label{eq:trans-from-lift}
  \trans{\cfib{p}}{\emp}{y}{\kappa}(y_0) & = \plift{}{\emp}{\emp}{y}\parens{\prealize{}{\emp}{\emp}{y}\parens{\fset{y_0}}}_{\emp}\parens{\spec{y}{1}} \\
  & = \cs{Y}_{\spec{y}{1}}\parens{\pfill{}{\emp}{\emp}{y}\parens{\fset{y_0}}}.
\end{align}

\begin{figure}[tp]
  \centering
  \begin{tikzcd}
    \fspbox{I}{J'}{y}
    \arrow{rr}{\parens{\fspbox{I}{-}{y}}_h}
    \arrow[hook]{dd}[swap]{\iota_0}
    \arrow[bend left=40]{rrr}{\beta'}
    &&
    \fspbox{I}{J}{y} \arrow{r}[near start]{\beta}
    \arrow[hook]{dd}[swap, near end]{\iota_0}
    &
    \cs{Y}
    \arrow{dd}{\cfib{p}}
    \\
    \\
    \fscube{\bdcomb{I}{J'}{y}}
    \arrow{rr}[swap]{\parens{\fscube{\bdcomb{I}{-}{y}}}_h}
    \arrow[dashed]{rrruu}
    &&
    \fscube{\bdcomb{I}{J}{y}}
    \arrow[dashrightarrow]{uur}
    \arrow{r}[swap]{\kappa}
    &
    \cs{X}
  \end{tikzcd}
  \qquad
  \begin{tikzcd}
    \fsnbox{I}{J'}{y}
    \arrow{rr}{\parens{\fsnbox{I}{-}{y}}_h}
    \arrow[hook]{dd}[swap]{\iota_1}
    \arrow[bend left=40]{rrr}{\beta'}
    &&
    \fsnbox{I}{J}{y} \arrow{r}[near start]{\beta}
    \arrow[hook]{dd}[swap, near end]{\iota_1}
    &
    \cs{Y}
    \arrow{dd}{\cfib{p}}
    \\
    \\
    \fscube{\bdcomb{I}{J'}{y}}
    \arrow{rr}[swap]{\parens{\fscube{\bdcomb{I}{-}{y}}}_h}
    \arrow[dashed]{rrruu}
    &&
    \fscube{\bdcomb{I}{J}{y}}
    \arrow[dashrightarrow]{uur}
    \arrow{r}[swap]{\kappa}
    &
    \cs{X}
  \end{tikzcd}
  \caption{Uniform Geometric Kan Conditions for Fibrations (Liftings Dashed)}
  \label{fig:uni-gkc-fib}
\end{figure}
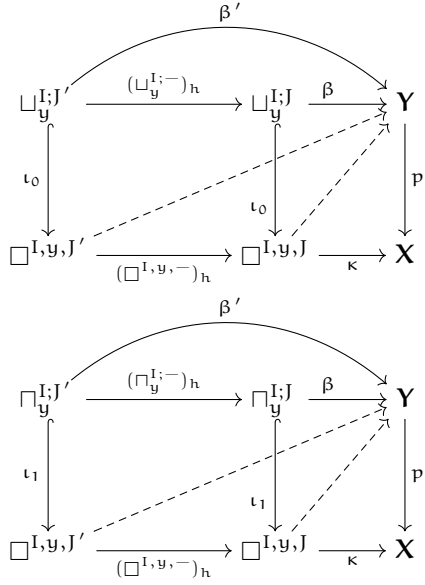

The standard Kan condition for fibrations generalizes to the geometric uniform Kan condition for fibrations by admitting boxes
that have additional dimensions whose opposing faces are omitted (and hence provided by a filling cube).  The required
naturality conditions are illustrated in Figure~\ref{fig:uni-gkc-fib} in their geometric formulation for both positive
and negative boxes.

\smallskip

One can define cubes, boxes, and box projections for a fibration $\cfib{p}:\cs{Y}\to\cs{X}$
and develop geometric and algebraic representations as in Section~\ref{sec:ukc}.
A cube or a box in the fibration $\cfib{p}$ is naturally a cube or a box in $\cs{Y}$
lying over some index cube in $\cs{X}$.
In case of cubes, the index cube in $\cs{X}$ can be recovered from the geometric cube in $\cs{Y}$
by post-composition along with $p$;
therefore a geometric cube in $\cfib{p}$ can simply be a geometric cube in $\cs{Y}$.
A positive geometric box in $\cfib{p}$, written $\tuple{\kappa}{\beta}$,
consists of a geometric box $\beta$ in $\cs{Y}$ and the index cube $\kappa$ in $\cs{X}$
such that $\ocomp{\beta}{\cfib{p}} = \ocomp{\iota_0}{\kappa}$,
which is to say $\beta$ lies over $\kappa$.
\begin{displaymath}
  \begin{tikzcd}
    \fspbox{I}{J}{y} \arrow{r}{\beta} \arrow[hook]{d}{\iota_0} & \cs{Y} \arrow{d}{\cfib{p}} \\
    \fscube{\bdcomb{I}{J}{y}} \arrow{r}{\kappa} & \cs{X}
  \end{tikzcd}
\end{displaymath}
In other words, the set of positive geometric boxes in the fibration $\cfib{p}$,
written $\geopbox{p}{I}{J}{y}$, is the pullback of the cospan
\[
  \cospandiagram{\hom{}{\fscube{\bdcomb{I}{J}{y}}}{\cs{X}}}
  {\hom{}{\fspbox{I}{J}{y}}{\cs{Y}}}
  {\hom{}{\fspbox{I}{J}{y}}{\cs{X}}}
  {\ocomp{\iota_0}{-}}{\ocomp{-}{\cfib{p}}}.
\]
The set of negative geometric boxes, written $\geonbox{p}{I}{J}{y}$, is defined analogously.
The positive geometric box projection sends a geometric cube $\kappa:\fscube{\bdcomb{I}{J}{y}}\to\cs{Y}$
to $\tuple{\ocomp{\kappa}{\cfib{p}}}{\ocomp{\iota_0}{\kappa}}$ as a positive geometric box.
This gives well-defined positive boxes because the box $\ocomp{\iota_0}{\kappa}$ must
lie over the cube $\ocomp{\kappa}{\cfib{p}}$.
The negative projection is defined analogously.
A lifting operation, just as before, is a section of a box projection
and the uniformity means naturality in $J$.

A more algebraic, combinatorial description may be obtained as well.
Let the fibration in question again be $\cfib{p}:\cs{Y}\to\cs{X}$.
The collection of algebraic $I$-cubes in the fibration $\cfib{p}$ is just $\cs{Y}_I$
because geometric cubes in the fibration $\cfib{p}$ are simply geometric cubes in $\cs{Y}$.
A positive algebraic box in the fibration $\cfib{p}$ is then a pair
$\tuple{\alg{\kappa}}{\alg{\beta}}$ of an algebraic box $\alg{\beta}$ in $\algpbox{Y}{I}{J}{y}$
and an algebraic (index) cube $\alg{\kappa}$ in $\cs{X}_{\bdcomb{I}{J}{y}}$
such that $\alg{\beta}$ lies over $\alg{\kappa}$,
or equivalently, every face of $\alg{\beta}$ lies over the matching aspect of $\alg{\kappa}$.
Formally speaking, it requires that for any face map $f\in\pafm{I}{y}$,
\begin{equation}
  \label{eq:alg-over}
  \cfib{p}_{\cod{f}}\parens{\alg{\beta}_f} = \cs{X}_f\parens{\alg{\kappa}}.
\end{equation}
The collection of such positive boxes is written as $\algpbox{p}{I}{J}{y}$,
and the negative counterpart is $\algnbox{p}{I}{J}{y}$.
Finally, the algebraic box projection is the combination
of the fibrations $\cfib{p}$ (for the index cube) and the box projection in $\cs{Y}$ as follows.
\begin{gather*}
  \paproj{I}{J}{y}:\cs{Y}_{\bdcomb{I}{J}{y}}\to\algpbox{\cfib{p}}{I}{J}{y}
  \\
  \paproj{I}{J}{y}\parens{\alg{\kappa}} \eqdef{}
  \tuple{\cfib{p}_{\bdcomb{I}{J}{y}}\parens{\alg{\kappa}}}{\fset{\cs{Y}_f\parens{\alg{\kappa}}}_{f\in\pafm{I}{y}}}
  \\
  \naproj{I}{J}{y}:\cs{Y}_{\bdcomb{I}{J}{y}}\to\algnbox{\cfib{p}}{I}{J}{y}
  \\
  \naproj{I}{J}{y}\parens{\alg{\kappa}} \eqdef{}
  \tuple{\cfib{p}_{\bdcomb{I}{J}{y}}\parens{\alg{\kappa}}}{\fset{\cs{Y}_f\parens{\alg{\kappa}}}_{f\in\nafm{I}{y}}}
\end{gather*}

\begin{proposition}
  $\paproj{I}{J}{y}$ and $\naproj{I}{J}{y}$ for fibrations give well-defined algebraic boxes in $\cfib{p}$
  in the sense that Equation~\eqref{eq:alg-over} holds.
\end{proposition}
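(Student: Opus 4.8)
The plan is to recognise Equation~\eqref{eq:alg-over} as a direct consequence of the naturality of $\cfib{p}$ as a morphism of cubical sets, namely Equation~\eqref{eq:nat-fib}; once the definitions are unfolded, essentially nothing beyond functoriality is at stake.

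First I would fix an input algebraic cube $\alg{\gamma}\in\cs{Y}_{\bdcomb{I}{J}{y}}$ and unfold the definition of the fibrational box projection. Its positive value is the pair
\[
  \paproj{I}{J}{y}\parens{\alg{\gamma}} = \tuple{\cfib{p}_{\bdcomb{I}{J}{y}}\parens{\alg{\gamma}}}{\famset{\cs{Y}_f\parens{\alg{\gamma}}}{f\in\pafm{I}{y}}},
\]
so that, read against the hypotheses of Equation~\eqref{eq:alg-over}, the index cube $\alg{\kappa}$ is $\cfib{p}_{\bdcomb{I}{J}{y}}\parens{\alg{\gamma}}\in\cs{X}_{\bdcomb{I}{J}{y}}$ and the $f$-th face $\alg{\beta}_f$ is $\cs{Y}_f\parens{\alg{\gamma}}\in\cs{Y}_{\cod{f}}$.

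Second, for each face map $f\in\pafm{I}{y}$, taken in its $J$-instance $f:\bdcomb{I}{J}{y}\to\cod{f}$, I would observe that the obligation $\cfib{p}_{\cod{f}}\parens{\alg{\beta}_f}=\cs{X}_f\parens{\alg{\kappa}}$ becomes, after these substitutions,
\[
  \cfib{p}_{\cod{f}}\parens{\cs{Y}_f\parens{\alg{\gamma}}} = \cs{X}_f\parens{\cfib{p}_{\bdcomb{I}{J}{y}}\parens{\alg{\gamma}}},
\]
which is precisely the instance of Equation~\eqref{eq:nat-fib} at the cubical morphism $f$ and the element $\alg{\gamma}$. I would then note that the negative case, for $\naproj{I}{J}{y}$, is obtained verbatim by replacing $\pafm{I}{y}$ with $\nafm{I}{y}$ throughout, the same naturality square discharging it.

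The ``hard part'' is essentially absent: the whole content is a single appeal to functoriality of $\cfib{p}$. The only place care is needed is the clash of names---keeping the shape parameter $I$ (the included dimensions) and the filling dimension $y$ of the box distinct from the generic dimension symbols of~\eqref{eq:nat-fib}, and remembering that each $f$ is used at its $J$-instance with domain $\bdcomb{I}{J}{y}$. For completeness I would also remark that the underlying family $\famset{\cs{Y}_f\parens{\alg{\gamma}}}{f\in\pafm{I}{y}}$ is already a legitimate positive algebraic box in $\algpbox{Y}{I}{J}{y}$---it satisfies the adjacency condition~\eqref{def:adjacency:pos}---since it is exactly the cubical-set box projection of Section~\ref{sec:ukc} applied to $\alg{\gamma}$, whose well-definedness follows from Propositions~\ref{prop:nerve-welldef} and~\ref{prop:boxproj}; combined with the computation above, this shows that the pair $\paproj{I}{J}{y}\parens{\alg{\gamma}}$ genuinely lies in $\algpbox{\cfib{p}}{I}{J}{y}$.
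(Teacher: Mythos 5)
Your proposal is correct and matches the paper's own proof: both unfold the definition of $\paproj{I}{J}{y}$ and reduce Equation~\eqref{eq:alg-over} to a single application of the naturality of $\cfib{p}$ (Equation~\eqref{eq:nat-fib}), with the negative case handled verbatim. Your closing remark that the underlying family already satisfies the adjacency condition is a reasonable bit of extra diligence, but it does not change the argument.
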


\begin{proof}
  For any $f\in\pafm{I}{y}$,
  \begin{align*}
    \cfib{p}_{\cod{f}}\parens{\paproj{I}{J}{y}\parens{\alg{\kappa}}_f}
    &=
    \cfib{p}_{\cod{f}}\parens{\cs{Y}_f\parens{\alg{\kappa}}}
    &&\text{(by definition)}
    \\&=
    \cs{X}_f\parens{\cfib{p}_{\bdcomb{I}{J}{y}}\parens{\alg{\kappa}}}.
    &&\text{(by naturality of $\cfib{p}$)}
  \end{align*}
  The same argument works for negative boxes.
\end{proof}

The geometric and algebraic descriptions enjoy the same equivalence in Section~\ref{sec:ukc},
which is to say that there are natural bijections or identities for cubes, boxes, box projections and uniform filling operations
between the two presentations, as shown in Figure~\ref{fig:fib-gkc-akc}.

\begin{figure}[ht]
  \centering
  \begin{tikzcd}[
      bijection/.style={Leftrightarrow}
    ]
    &
    \algpbox{p}{I}{J}{y}
    \arrow{rr}{\parens{\algpbox{p}{I}{-}{y}}_h}
    \arrow[dashed, bend left]{dd}[near start]{\pfill{}{I}{J}{y}}
    \arrow[leftarrow]{dd}[swap, near start]{\paproj{I}{J}{y}}
    & &
    \algpbox{p}{I}{J'}{y}
    \arrow[dashed, bend left]{dd}[near start]{\pfill{}{I}{J'}{y}}
    \arrow[leftarrow]{dd}[near end, swap]{\paproj{I}{J'}{y}}
    \\
    \geopbox{p}{I}{J}{y}
    \arrow[bijection]{ur}{\tuple{\coyoneda{}}{\hhp}}
    \arrow[crossing over]{rr}[pos=0.2, swap, yshift=-0.3em]{\parens{\geopbox{p}{I}{-}{y}}_h}
    \arrow[dashed, bend left]{dd}[pos=0.45]{\plift{}{I}{J}{y}}
    \arrow[leftarrow]{dd}[near end, swap]{\tuple{\ocomp{-}{\cfib{p}}}{\ocomp{\iota_0}{-}}}
    & &
    \geopbox{p}{I}{J'}{y}
    \arrow[bijection]{ur}{\tuple{\coyoneda{}}{\hhp}}
    \\
    &
    \cs{Y}_{\bdcomb{I}{J}{y}}
    \arrow{rr}[near start]{\parens{\cs{Y}_{\parens{\bdcomb{I}{-}{y}}}}_h}
    & &
    \cs{Y}_{\bdcomb{I}{J'}{y}}
    \\
    \hom{\CSET}{\fscube{\bdcomb{I}{J}{y}}}{\cs{Y}}
    \arrow[bijection]{ur}{\coyoneda{}}
    \arrow{rr}{\hom{\CSET}{\fscube{\bdcomb{I}{-}{y}}}{\cs{Y}}_h}
    & &
    \hom{\CSET}{\fscube{\bdcomb{I}{J'}{y}}}{\cs{Y}}
    \arrow[bijection]{ur}{\coyoneda{}}
    \arrow[leftarrow, dashed, crossing over, bend right]{uu}[swap, near end]{\plift{X}{I}{J}{y}}
    \arrow[crossing over]{uu}[near start]{\tuple{\ocomp{-}{\cfib{p}}}{\ocomp{\iota_0}{-}}}
  \end{tikzcd}

  \bigskip

  \begin{tabular}{lllll}
    \textit{Elements} & \textit{Geometric} & \textit{Algebraic} & \textit{Theorem}
    \\
    \midrule
    Cubes & $\fscube{I} \to \cs{Y}$       & $\cs{Y}_I$             & Natural Bijection
    \\
    Positive Boxes
    & $\geopbox{p}{I}{J}{y}$
    & $\algpbox{p}{I}{J}{y}$ & Natural Bijection
    \\
    Negative Boxes
    & $\geonbox{p}{I}{J}{y}$
    & $\algnbox{p}{I}{J}{y}$ & Natural Bijection
    \\
    Pos.\ Box Projection & $\tuple{\ocomp{-}{\cfib{p}}}{\ocomp{\iota_0}{-}}$ & $\paproj{I}{J}{y}$ & Identity
    \\
    Neg.\ Box Projection & $\tuple{\ocomp{-}{\cfib{p}}}{\ocomp{\iota_1}{-}}$ & $\naproj{I}{J}{y}$ & Identity
    \\
    Pos.\ Uniform Filling & $\plift{X}{I}{J}{y}$ & $\pfill{X}{I}{J}{y}$ & Type Equivalence
    \\
    Neg.\ Uniform Filling & $\nlift{X}{I}{J}{y}$ & $\nfill{X}{I}{J}{y}$ & Type Equivalence
  \end{tabular}
  \caption{Algebraic and Geometric Kan Structures for a Fibration $\cfib{p}:\cs{Y}\to\cs{X}$}
  \label{fig:fib-gkc-akc}
\end{figure}

\section{Discussion and Conclusion}
\label{sec:conclusion}

The uniform Kan condition is a central notion in the model of higher-dimensional type theory in the category of cubical
sets given by~\cite{bch}.  Inspired by their work, we relate a geometric formulation of the UKC, which is expressed in
terms of fibrations and lifting properties, to an algebraic formulation that is closer to the one given by~\cite{bch},
but which also provides another characterization of open boxes.  Geometric and algebraic open boxes are related by a
Yoneda-like correspondence between the morphisms from co-sieves and the algebraic definition given here.  The uniformity
condition on box-filling given by~\cite{bch} may be seen via the geometric characterization as naturality in the extra
dimensions of an open box.

\cite{bch} avoid using Kan fibrations in the form considered here to model families of types, because to do so would
incur the well-known coherence problems arising from modeling exact conditions on substitution in type theory by
universal conditions that do not meet these exactness requirements.  On the other hand, several
authors~\citep{simplicial,awodey:naturalmodel,hofmann:interpretation,coherencecomprehension,curien+:catdeptypes} have
proposed ways to overcome the coherence problems by constructing refined fibration-based models that validate the
required exactness properties of type theory.  It is conceivable that one can build a model of type theory in terms of
uniform Kan fibrations by applying these ideas.

\section{Acknowledgements}
The foregoing presentation draws heavily on Williamson's survey of combinatorial homotopy theory from a cubical
perspective~\citep{williamson2012combinatorial} and, of course, on \cite{bch}, which inspired it.  Numerous
conversations with Carlo Angiuli, Steve Awodey, Spencer Breiner, Dan Licata, Ed Morehouse and Vladimir Voevodsky were
instrumental in preparing this article.

\nocite{breiner:cubical-sets}
\nocite{constable-et-al1985}
\nocite{breiner14}

\bibliographystyle{abbrvnat}
\bibliography{csm}

\end{document}